\documentclass[11pt]{amsart}
\usepackage{amssymb}
\usepackage[T1]{fontenc}
\usepackage[utf8]{inputenc}
\usepackage{color}
\usepackage{appendix}
\usepackage{comment}
\usepackage{wasysym}
\usepackage{xcolor}

\definecolor{darkgreen}{rgb}{0,0.5,0}
\definecolor{darkblue}{rgb}{0,0,0.8}
\definecolor{darkred}{rgb}{0.8,0,0}
\definecolor{firebrick4}{rgb}{0.84,0.1,0.35}

\usepackage[pdfencoding=auto,colorlinks,citecolor=darkgreen,linkcolor=darkblue,urlcolor=darkred]{hyperref}

\usepackage{anysize}
\marginsize{2.5cm}{2.5cm}{2.5cm}{2.5cm}
\linespread{1.3}

\usepackage{soul}
\usepackage{url}

\newcommand{\DD}{\mathcal{D}}
\newcommand{\N}{\mathbb{N}}

\newcommand{\Mod}[1]{\ (\mathrm{mod}\ #1)}

\renewcommand{\leq}{\leqslant}
\renewcommand{\geq}{\geqslant}
\renewcommand{\P}{\mathbb{P}}

\newcommand{\zzg}{\zeta_{\mathcal{G}}}
\newcommand{\G}{\mathcal{G}}
\newcommand{\zzgdva}{\zeta_{\G_2}}
\newcommand{\zzgminusdva}{\zeta_{\G_{-2}}}

\theoremstyle{plain}
\newtheorem{theorem}{Theorem}
\newtheorem{proposition}[theorem]{Proposition}
\newtheorem{lemma}[theorem]{Lemma}
\newtheorem{corollary}[theorem]{Corollary}

\theoremstyle{remark}
\newtheorem{remark}[theorem]{\textit{\textbf{Remark}}}
\newtheorem*{definition*}{Definition}
\newtheorem{definition}{Definition}

\title{Asymptotics of $D(q)$-pairs and triples via $L$-functions of Dirichlet characters 
}

\author[N. Adžaga]{Nikola Adžaga}
\address{Department of Mathematics, Faculty of Civil Engineering, University of Zagreb, Croatia}
\email{nikola.adzaga@grad.unizg.hr}
\author[G. Dražić]{Goran Dra\v zi\' c}
\address{Faculty of Food Technology and Biotechnology, University of Zagreb, Croatia}
\email{goran.drazic@pbf.unizg.hr}
\author[A. Dujella]{Andrej Dujella}
\address{Department of Mathematics, Faculty of Science, University of Zagreb, Croatia}
\email{duje@math.hr}
\author[A. Pethő]{Attila Pethő}
\address{Department of Computer Science, University of Debrecen, Hungary}
\email{petho.attila@unideb.hu}


\keywords{Diophantine $m$-tuples, order of magnitude, $L$-function, Dirichlet characters}
\subjclass[2020]{11D46, 11M06, 11D09, 11N56}

\begin{document}

\begin{abstract}

Let $q$ be a non-zero integer. A \emph{$D(q)$-$m$-tuple} is a set of $m$ distinct positive integers $\{a_1, a_2, \dots, a_m\}$ such that $a_ia_j+q$ is a perfect square for all $1 \leqslant i < j \leqslant m$. By counting integer solutions $x\in[1,b]$ of congruences $x^2 \equiv q \Mod{b}$ with $b\leqslant N$, we count $D(q)$-pairs 
with both elements up to $N,$ and give estimates on asymptotic behaviour. We show that for prime $q$, the number of such $D(q)$-pairs and $D(q)$-triples grows linearly with $N$. Up to a factor of $2$, the slope of this linear function is the quotient of the value of the $L$-function of an appropriate Dirichlet character (usually a Kronecker symbol) and of $\zeta(2)$.

\end{abstract}

\maketitle

\section{Introduction}

A Diophantine pair is a set of two positive integers $\{a, b\}$ such that $ab+1$ is a perfect square. Dujella has proven that the number of Diophantine pairs with both elements less than or equal to $N$ asymptotically grows as $\frac{6}{\pi^2}N\log N$ in \cite{DujeGlavni} (while the error term was further estimated in \cite{Lao}). The problem is equivalent to counting solutions of $x^2 \equiv 1 \Mod{n},$ where  $n$ ranges from $1$ to $N$. This congruence has $2^{\omega(n)}$ solutions for each $n$ (where $\omega(n)$ denotes the number of distinct prime factors of $n$), so the problem is reduced to estimating the sum $\displaystyle\sum_{n=1}^N 2^{\omega(n)}$.

In \cite{DujeGlavni}, it was also shown that the number of Diophantine triples (i.e.~the number of sets of three positive integers such that the product of any two is one less than a perfect square) is roughly half of the number of Diophantine pairs, while the number of Diophantine quadruples was shown to have the order of magnitude of $\sqrt[3]{N}\log N$.  Martin and Sitar in \cite{MartinSitar} have then determined that the number of Diophantine quadruples with all elements less than or equal to $N$ asymptotically grows as $\dfrac{2^{4/3}}{3\Gamma(2/3)^3}\, \sqrt[3]{N}\log N$.

One natural generalization of Diophantine $m$-tuples is obtained by replacing $1$ with a different but fixed non-zero integer: \emph{$D(q)$-$m$-tuple} is a set of $m$ positive integers $\{a_1, \dotsc, a_m\}$ such that $a_ia_j+q$ is a perfect square for all $1\leqslant i < j \leqslant m$. So far, infinitely many $D(q)$-quadruples have been found only for square numbers $q$. Therefore, we wish to estimate the number of $D(q)$-pairs and $D(q)$-triples. Denote by $D_{m,q}(N):=| \lbrace S\subset \lbrace 1, 2, \dotsc, N\rbrace \colon S$ is a $D(q)$-$m$-tuple \hspace{-0.6em} $\rbrace|.$ 

We often deal with quadratic congruences $x^2 \equiv q \pmod{b}$, where $b$ and $q$ are integers and $b\geq 2$. Under the number of its solutions we mean the number of integers $x\in [1,b]$ satisfying it.

Let $q$ be any integer such that $|q|$ is prime. In this paper we estimate $D_{2,q}(N)$, the number of $D(q)$-pairs $(a,b)$ such that $a<b$ where $b$ ranges from $1$ to $N$. We do this by counting the number of solutions of congruences 
\begin{equation} 
 x^2 \equiv q \Mod b
\label{eq:01}\end{equation}  where $b$ ranges from $1$ to $N$. We will prove asymptotic estimates on the number of solutions to said congruence equations, and these estimates will easily translate to $D_{2,q}(N),$ since the two quantities differ by $O(1).$ 

In Section \ref{sec:equi} we explain how the problem reduces to counting solutions of congruences \eqref{eq:01}. We use quadratic reciprocity to characterize the moduli $b$ such that equation \eqref{eq:01} has a solution, and express the number of its solutions (in a complete residue system) as a function of the number of distinct prime factors of $b$. Finally, we proceed to estimate the relevant weighted sums (of $2^{\omega(n)}$) by analyzing their Dirichlet series and applying a tauberian theorem.

$L$-functions of Dirichlet characters appear in our results -- we use the same notation as in LMFDB \cite{lmfdb}, and the relevant background is stated in the Appendix of this paper to make it self-contained.

Here we state our results for prime $2$ (and $-2$). 

\begin{theorem}\label{tm:pm2}
The number of $D(2)$-pairs with both elements in the set $\lbrace1, 2, \dots ,N\rbrace$ satisfies
\[ D_{2,2}(N) \sim \frac{L(1,\chi_{8,5})}{\zeta(2)}\cdot N \approx 0.37888N, \] 
whereas the number of $D(-2)$-pairs with both elements in the set $\lbrace1, 2, \dots ,N\rbrace$ satisfies
\[ D_{2,-2}(N) \sim \frac{L(1,\chi_{8,3})}{\zeta(2)}\cdot N \approx 0.67524N. \]
\end{theorem}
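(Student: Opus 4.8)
The plan is to reduce, by the argument of Section~\ref{sec:equi}, to the weighted sum $S_q(N):=\sum_{b=1}^{N}\rho_q(b)$, where $\rho_q(b)$ is the number of $x\in[1,b]$ with $x^{2}\equiv q\Mod{b}$, since $D_{2,q}(N)=S_q(N)+O(1)$ for $q=\pm 2$. Everything then comes down to determining the Dirichlet series $F_q(s)=\sum_{b\geq 1}\rho_q(b)\,b^{-s}$ and reading off the growth of $S_q(N)$ from its rightmost pole.

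First I would evaluate $\rho_q$ exactly. It is multiplicative by the Chinese Remainder Theorem, so it suffices to treat prime powers. For an odd prime $p$, Hensel's lemma gives $\rho_q(p^{k})=2$ for every $k\geq 1$ when $q$ is a quadratic residue modulo $p$, and $\rho_q(p^{k})=0$ otherwise; by the supplementary laws of quadratic reciprocity the residue condition reads $p\equiv\pm 1\Mod{8}$ for $q=2$ and $p\equiv 1,3\Mod{8}$ for $q=-2$. At $p=2$, inspecting squares modulo $2$, $4$ and $8$ gives $\rho_{\pm 2}(2)=1$ and $\rho_{\pm 2}(2^{k})=0$ for $k\geq 2$. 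Hence $F_q$ factors as an Euler product
\[
 F_q(s)=(1+2^{-s})\prod_{p\ \mathrm{good}}\Bigl(1+\frac{2p^{-s}}{1-p^{-s}}\Bigr)=(1+2^{-s})\prod_{p\ \mathrm{good}}\frac{1+p^{-s}}{1-p^{-s}},
\]
the product running over the odd primes in the admissible residue classes modulo $8$.

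Next I would identify this product. Using $\dfrac{1+p^{-s}}{1-p^{-s}}=\dfrac{1-p^{-2s}}{(1-p^{-s})^{2}}$ and comparing Euler factors prime by prime shows that
\[
 F_q(s)=\frac{\zeta(s)\,L(s,\chi)}{\zeta(2s)},
\]
where $\chi$ is the real primitive character of conductor $8$ equal to $+1$ exactly on the admissible classes: for $q=2$ it is the Kronecker symbol of $\Q(\sqrt{2})$, trivial precisely on the classes $1,7$ modulo $8$; for $q=-2$ it is the Kronecker symbol of $\Q(\sqrt{-2})$, trivial precisely on $1,3$ modulo $8$. A short computation from the Conrey/LMFDB convention recalled in the Appendix identifies these as $\chi_{8,5}$ and $\chi_{8,3}$ respectively.

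Finally, $\zeta(s)L(s,\chi)/\zeta(2s)$ is meromorphic on $\Re s>1/2$ with a single, simple pole, at $s=1$: $L(s,\chi)$ is entire (as $\chi$ is non-principal) with $L(1,\chi)\neq 0$, and $\zeta(2s)$ is holomorphic and non-vanishing on $\Re s\geq 1$, so the only pole is the one inherited from $\zeta(s)$, of residue $L(1,\chi)/\zeta(2)$. Since $\rho_q(b)\geq 0$, the Tauberian theorem of Section~\ref{sec:equi} gives $S_q(N)\sim\dfrac{L(1,\chi)}{\zeta(2)}\,N$, hence $D_{2,q}(N)\sim\dfrac{L(1,\chi)}{\zeta(2)}\,N$; the displayed decimals then come from the analytic class number formula, which yields $L(1,\chi_{8,5})=\dfrac{\log(1+\sqrt{2})}{\sqrt{2}}$ and $L(1,\chi_{8,3})=\dfrac{\pi}{2\sqrt{2}}$. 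The individual steps are routine, and the one deserving most care is the last: that $F_q$ has no singularity on the line $\Re s=1$ apart from the pole at $s=1$ --- equivalently, the non-vanishing of $L(s,\chi)$ on $\Re s=1$ together with $\zeta(2s)$ avoiding its zeros there --- since this is exactly the hypothesis needed for the Tauberian step, and is also what makes the growth here linear rather than of order $N\log N$, as in the classical case $q=1$.
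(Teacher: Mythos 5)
Your proof is correct, and it reaches the same analytic endpoint as the paper --- the identity $F_q(s)=\zeta(s)L(s,\chi)/\zeta(2s)$ followed by Wiener--Ikehara --- but by a slightly different and arguably cleaner decomposition. The paper does not treat the full counting function $\rho_q$ as a single multiplicative object: it first restricts to odd moduli, writes $2^{\omega(n)}\lambda_{\G}(n)$ as the Dirichlet convolution $\mu_{\G}^2*\lambda_{\G}$ to get $\beta_2(s)=\zeta_{\G}^2(s)/\zeta_{\G}(2s)=\frac{\zeta(s)}{\zeta(2s)}\cdot\frac{L(s,\chi_{8,5})}{1+2^{-s}}$, extracts $B_2(N)\sim\frac{2L(1,\chi_{8,5})}{3\zeta(2)}N$, and only then accounts for the moduli with $2\| n$ via the separate sum $C_2(N)=B_2(N)+B_2(\lfloor N/2\rfloor)$, whose extra factor $\frac32$ cancels the $\frac{1}{1+2^{-1}}=\frac23$. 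You instead fold the prime $2$ into the Euler product from the start (using $\rho_{\pm2}(2)=1$, $\rho_{\pm2}(2^k)=0$ for $k\geq 2$, which is exactly the paper's ``$2\|b$'' condition), so the factor $(1+2^{-s})$ appears in the numerator and cancels against $\zeta(2s)^{-1}$, and a single application of the Tauberian theorem gives the final constant with no bookkeeping at the end. Your route buys economy; the paper's route buys a lemma (Lemma~\ref{lema:omjerzeta}) that is reused verbatim for all other primes $q$, where the local factors at $2$ and $q$ are messier and are more conveniently handled by the $C_q(N)=\sum B_q(\lfloor N/\delta\rfloor)$ device. One small correction: the non-vanishing of $L(s,\chi)$ on $\Re s=1$ is not what the Tauberian step requires --- $L$ sits in the numerator, so a zero there would cause no singularity; what you actually need is that $L$ is entire (true, as $\chi$ is non-principal), that $\zeta(2s)\neq 0$ for $\Re s\geq 1$ (immediate from the Euler product), and that $L(1,\chi)\neq 0$ so the residue, hence the leading constant, is non-zero.
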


The estimates for other primes $q$ are more involved. The results depend on the remainder of $q$ modulo $8$ (i.e.\ on the power of $2$ dividing $q-1$), and it turns out that the relevant Dirichlet character is always the Kronecker symbol.

\begin{theorem}\label{tm:sviq} Let $q$ be an integer such that $|q|$ is a prime or $q=-1$, and denote by $D_{2,q}(N)$ the number of $D(q)$-pairs with both elements in the set $\{1,2,\ldots,N\}$. 

\begin{itemize}
    \item [a)]
If $q\equiv 3 \Mod{4}$, then
\[ D_{2,q}(N)  \sim \frac{L(1,\chi_{4|q|,4|q|-1})}{\zeta(2)}\cdot N . \]
    \item [b)] If $q\equiv 5 \Mod{8}$, then
\[ D_{2,q}(N)  \sim \frac{2L(1,\chi_{|q|,|q|-1})}{\zeta(2)}\cdot N . \]
\item [c)] If $q\equiv 1 \Mod{8}$, then
\[ D_{2,q}(N)  \sim \frac{L(1,\chi_{|q|,|q|-1})}{\zeta(2)}\cdot N . \]
\end{itemize}
\end{theorem}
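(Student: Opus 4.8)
The plan is to follow the pipeline laid out in the introduction: reduce counting $D(q)$-pairs to counting solutions of $x^2\equiv q\Mod b$ for $b\le N$, then express the total count of solutions as a weighted sum $\sum_{b\le N} f(b)$ where $f(b)$ depends only on $\omega(b)$ and on local conditions at the primes dividing $b$, then study the Dirichlet series $\sum_b f(b) b^{-s}$ and extract the asymptotics via the Tauberian theorem cited in Section \ref{sec:equi}. Concretely, by quadratic reciprocity the congruence $x^2\equiv q\Mod{p^k}$ is solvable for an odd prime $p\nmid q$ iff $\left(\frac{q}{p}\right)=1$, in which case it has $2$ solutions; at $p\mid q$ (so $p=|q|$) and at $p=2$ the count is governed by the residue of $q$ modulo $8$. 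So the number of solutions is $2^{\omega'(b)}$ on the support, where $\omega'$ counts the "good" odd primes, with a bounded correction factor at $2$ and at $|q|$. The generating Dirichlet series therefore factors as an Euler product $\prod_p (\text{local factor})$, and the key point is that this Euler product equals $L(s,\chi)\,\zeta(s)/\zeta(2s)$ up to a factor holomorphic and non-vanishing in a neighborhood of $s=1$, where $\chi$ is precisely the Kronecker symbol attached to $q$: $\left(\frac{q}{\cdot}\right)$ has conductor $|q|$ when $q\equiv1\Mod 4$ and $4|q|$ when $q\equiv 3\Mod 4$, which is why cases (a) versus (b)+(c) split along $q\bmod 4$.

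First I would fix the reduction statement (from Section \ref{sec:equi}) so that $D_{2,q}(N)=\sum_{b\le N}(\#\text{solutions of }x^2\equiv q\Mod b)+O(1)$. Then I would compute, for each prime $p$, the number $N_p(k)$ of solutions of $x^2\equiv q\Mod{p^k}$ using Hensel's lemma and the classical formulas for $\Z/2^k\Z$, splitting into: (i) odd $p\nmid q$, where $N_p(k)=1+\left(\frac{q}{p}\right)$ independent of $k\ge1$; (ii) $p=|q|$, where $q\equiv 0$ but $q/p$ is a unit, giving a single solution $x\equiv 0$ at $k=1$ and no lift to $k\ge2$ — hence a uniformly bounded contribution; (iii) $p=2$, where solvability and the solution count depend on $q\bmod 8$ ($q\equiv1$: up to $4$ solutions for $k\ge 3$; $q\equiv5$: solvable mod $4$ but not mod $8$, giving $2$ solutions; $q\equiv 3\Mod 4$: $x^2\equiv q\Mod 4$ unsolvable, forcing $b$ odd). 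Assembling these multiplicatively gives the Dirichlet series $F(s)=\sum_{b\ge1} (\#\text{sol mod }b)\, b^{-s} = G_2(s)\,G_q(s)\prod_{p\nmid 2q}\left(1+\tfrac{1+(q/p)}{p^s-1}\right)$, where $G_2,G_q$ are explicit finite Euler factors.

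The main computational step — and the place I expect the real work to be — is identifying the infinite product $\prod_{p\nmid 2q}\bigl(1+\frac{1+(q/p)}{p^s-1}\bigr)$ with $L(s,\chi)\,\zeta(s)/\zeta(2s)$ times an Euler product that converges (and is non-zero) for $\Re s>1/2$. This is a routine-looking but delicate manipulation: one writes $1+\frac{1+(q/p)}{p^s-1}=\frac{1}{1-p^{-s}}\cdot\frac{1}{1-(q/p)p^{-s}}\cdot\bigl(1-p^{-2s}+O(p^{-3s})\bigr)$ and checks that the error Euler factors at the bad primes $p\in\{2,|q|\}$, together with $G_2(s)G_q(s)$, combine so that the residue of $F(s)$ at $s=1$ is exactly $L(1,\chi)/\zeta(2)$ in cases (a) and (c), and $2L(1,\chi_{|q|,|q|-1})/\zeta(2)$ in case (b) — the extra factor $2$ coming from the $p=2$ local factor, which contributes an honest $2$ to the count for every even $b$ when $q\equiv5\Mod 8$, effectively doubling the density. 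Getting the bad-prime bookkeeping right so that the conductor and the normalizing constant come out as stated is the crux. Once $F(s)$ is written as $\frac{L(1,\chi)}{\zeta(2)}\cdot\frac{1}{s-1}+(\text{holomorphic for }\Re s>1/2)$ — more precisely as $H(s)\,\zeta(s)$ with $H$ holomorphic and non-vanishing on $\Re s\ge1$ — applying the Wiener–Ikehara / Delange Tauberian theorem from Section \ref{sec:equi} yields $\sum_{b\le N}(\#\text{sol})\sim H(1)\,N$, i.e. $D_{2,q}(N)\sim \frac{L(1,\chi)}{\zeta(2)}N$ with the stated $\chi$ and constant, and the $q=-1$ case is just (a) with $|q|=1$, where $\chi_{4,3}$ is the non-principal character mod $4$, recovering the classical $D(-1)$ count.
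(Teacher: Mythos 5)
Your overall strategy is sound and, carried to completion, does prove the theorem; but it is organized differently from the paper. You treat the solution count $r(b)=\#\{x\bmod b: x^2\equiv q\}$ as a single multiplicative function of $b$ and factor its full Dirichlet series $F(s)=\sum_b r(b)b^{-s}$ into an Euler product with explicit local factors at $2$ and $|q|$, then apply Wiener--Ikehara once. The paper instead restricts first to moduli coprime to $2q$, proves $B_q(N)\sim lN$ for that restricted sum (Lemma \ref{lema:omjerzeta}, Lemma \ref{betaL}, Proposition \ref{prop:15mod8}), and then reassembles the full count as $C_q(N)=\sum_\delta m_\delta B_q(\lfloor N/\delta\rfloor)$ over the admissible prefactors $\delta=2^j|q|^i$ from Lemma \ref{Lemma:4}; in case (c) this is an infinite sum over powers of $2$, and the paper needs Tannery's theorem to interchange the limit with the series. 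Your route avoids Tannery entirely and handles all three cases uniformly, at the price of having to get the local Euler factors at the bad primes exactly right; the paper's route isolates the analytic work in a cleaner sub-sum but pays for it with the floor-function bookkeeping. I verified that your identity $1+\frac{1+(q/p)}{p^s-1}=\frac{1-p^{-2s}}{(1-p^{-s})(1-\chi(p)p^{-s})}$ is exact (no $O(p^{-3s})$ correction is needed), and that the resulting residues at $s=1$ --- namely $1$ in cases (a) and (c) and $1+\tfrac12+\tfrac12=2$ in case (b) for the factor at $p=2$ relative to $\zeta(s)L(s,\chi)/\zeta(2s)$ --- reproduce the stated constants.

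One concrete error to fix: in case (a) you assert that unsolvability of $x^2\equiv q\Mod 4$ ``forces $b$ odd.'' It only forces $4\nmid b$; moduli with $2\,\|\,b$ are admissible (cf.\ $\delta\in\{1,2,q,2q\}$ in Lemma \ref{Lemma:2}), and they contribute the local factor $1+2^{-s}$ at $p=2$. This factor is not cosmetic: since $\chi_{4|q|,4|q|-1}(2)=0$, the quotient $\zeta(s)L(s,\chi)/\zeta(2s)$ already carries the factor $1+2^{-s}$ at $p=2$, so dropping the even moduli would leave you with the residue $\frac{2}{3}\cdot\frac{L(1,\chi)}{\zeta(2)}$ instead of $\frac{L(1,\chi)}{\zeta(2)}$. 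With the factor restored, $F(s)=\zeta(s)L(s,\chi_{4|q|,4|q|-1})/\zeta(2s)$ exactly in case (a) and the constant comes out as claimed.
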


In the last section, for any integer $n$, we relate the number of $D(n)$-triples with all elements up to $N$ to the number of $D(n)$-pairs. More precisely, we show the following theorem.

\begin{theorem}\label{tm:trojkeUvod}
Let $n$ be a non-zero integer. The number of $D(n)$-triples with all elements in the set $\{1, 2, \dotsc, N\}$ is asymptotically equal to half the number of $D(n)$-pairs. More precisely, 
\[ D_{3,n} (N) \sim \frac{D_{2,n}(N)}{2}. \]
\end{theorem}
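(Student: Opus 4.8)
The plan is to count $D(n)$-triples $\{a,b,c\}$ with $a<b<c\le N$ by summing, over each $D(n)$-pair $\{a,b\}$, the number of valid ``extensions'' $c$. Concretely, a triple corresponds to a pair $\{a,b\}$ together with an integer $c\in(b,N]$ such that both $ac+n$ and $bc+n$ are perfect squares. So I would write
\[
D_{3,n}(N)=\sum_{\{a,b\}\ \text{a }D(n)\text{-pair},\ b\le N}\#\{c:\ b<c\le N,\ ac+n=\square,\ bc+n=\square\},
\]
and the goal is to show that the inner count is, on average, $\tfrac12$. The heuristic is that $ac+n=r^2$ forces $c=(r^2-n)/a$, so candidate values of $c$ up to $N$ come from $r$ up to roughly $\sqrt{aN}$; for each such $c$ one then needs the additional square condition $bc+n=\square$, and since $c$ already satisfies $b\cdot c\equiv -n$ being solvable is governed by the same kind of congruence data used in Section \ref{sec:equi}. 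The key point, which I expect is the technical heart, is that the dominant contribution comes from pairs $\{a,b\}$ with $a$ small relative to $b$: when $b$ is near $N$ the number of extensions $c\in(b,N]$ is typically $0$ or $1$, and a careful count shows the ``$1$'' occurs for about half of such pairs, producing the factor $\tfrac12$.

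More precisely, I would fix $b\le N$ and analyze which pairs $\{a,b\}$ admit an extension $c\in(b,N]$. Since $ac+n=\square$ and $bc+n=\square$ with $c>b>a$, the squares are roughly $\sqrt{ac}$ and $\sqrt{bc}$; writing $ac+n=r^2$, the constraint $b<c\le N$ pins $r$ to a short interval, and for $b$ close to $N$ this interval contains at most one integer. Whether that integer yields a genuine extension (with $c$ an integer in range and $bc+n$ also square) is then a condition one can detect via the solution-counting machinery already developed: the number of $D(n)$-pairs with larger element $b$ is controlled by the number of solutions of $x^2\equiv n\pmod b$, and the refinement needed here is to count, among those, the ones that ``propagate'' to a third element. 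I would show that asymptotically half of them do, by the same Dirichlet-series/Tauberian analysis: the generating Dirichlet series for extendable pairs is, up to lower-order terms, exactly half that for all pairs, because the extra square condition cuts the relevant local densities in half at each prime (this is where the structure of $x^2\equiv n\pmod{b}$ as a multiplicative function of $\omega(b)$ is used).

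Alternatively — and this may be the cleaner route — I would mimic directly the argument from \cite{DujeGlavni} for $q=1$: there one shows that a Diophantine pair $\{a,b\}$ with $b\le N$ extends to a triple with third element $\le N$ essentially iff $b$ lies in the lower half of its range given $a$, i.e.\ roughly iff $ab\le$ (something like) $N$, and the count of such ``lower-half'' pairs is asymptotically half the total. The same dichotomy should hold for general $n$ with only cosmetic changes: given a $D(n)$-pair $\{a,b\}$, the two standard extensions $c_\pm$ satisfying $c_\pm = a+b \pm 2\sqrt{ab+n} - \text{(correction involving }n)$ — more precisely the values with $ac+n$ and $bc+n$ simultaneously square — satisfy $c_- < b < c_+$ typically, and $c_+\le N$ holds for about half the pairs counted by $D_{2,n}(N)$, while sporadic non-regular extensions contribute only $O(N^{\varepsilon})$ or $o(N)$ and are absorbed into the error term. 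The main obstacle is making the ``about half'' rigorous uniformly in $n$ and controlling the pairs near the boundary $b\approx N$ where the extension may fail to be an integer; this requires re-running the Tauberian estimate with the extra divisibility constraint, but no genuinely new idea beyond what Sections \ref{sec:equi} onward already provide.
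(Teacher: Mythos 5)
Your decomposition --- summing over pairs $\{a,b\}$ of the two \emph{smallest} elements and counting upward extensions $c\in(b,N]$ --- is genuinely different from the paper's, and it leaves the two hardest points unproved. First, the factor $\tfrac12$. You assert that the regular upward extension $c_+=a+b+2r\le N$ for ``about half'' of the pairs counted by $D_{2,n}(N)$, and elsewhere that ``the extra square condition cuts the local densities in half''; neither is a proof, and the second is not the right mechanism at all. Under your parametrization, establishing the $\tfrac12$ requires knowing the joint distribution of $(a,b)$ among $D(n)$-pairs well enough to measure the set where $(\sqrt a+\sqrt b)^2\le N$ --- a new Tauberian-type computation that you do not carry out. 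The paper sidesteps this entirely by parametrizing regular triples by the pair containing the \emph{largest} element: every pair $\{a,c\}$ with $ac+n=s^2$ extends \emph{downward} to $b=a+c-2s<c$, so the range condition $b\le N$ is automatic and only $O(1)$ degenerate cases ($b\le 0$, $b=a$, $b=c$, plus an extra $N$ exceptional pairs when $n$ is a perfect square) must be excluded; each regular triple is then obtained exactly twice, from $\{a,c\}$ and from $\{b,c\}$. The $\tfrac12$ is pure double counting, not an equidistribution statement about which pairs ``propagate upward.''

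Second, the irregular triples. You dismiss the non-regular extensions as contributing $O(N^{\varepsilon})$ or $o(N)$, but this is the technical heart of the proof and nothing in the earlier sections supplies it. For a fixed pair $\{a,b\}$ the admissible $c$ are parametrized by solutions of the Pellian equation $bs^2-at^2=n(b-a)$, which fall into as many as $2^{\omega(b-a)+\omega(n)+1}$ recursive sequences, each with $O(\log N)$ terms up to $N$; since $\omega(b-a)$ can be of order $\log b/\log\log b$, this bound is useless without further input. The paper controls it via the parameter $e$ from \cite{DujeN} and formula \eqref{eq:cbye}: $e<0$ forces $c\le n^2$, $e=0$ is exactly the regular case, and $e\ge 1$ forces the gap $c>2ab/n^2$, which caps the number of eligible pairs at $O(\sqrt N\log^2 N)$ and, combined with a Rosser--Schoenfeld estimate to bound $2^{\omega(b-a)}$ by $N^{0.01}$ outside an $O(1)$ set, gives a total of $O(N^{0.52})$ irregular triples. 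Without some version of this argument your error term is unjustified; with it, you would essentially have reproduced the paper's proof, at which point the downward-extension double count is the cleaner way to extract the main term.
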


Finally, we list the estimates we obtain (by applying Theorem \ref{tm:pm2} and Theorem \ref{tm:sviq}) on the number of $D(q)$-triples for integers $q$ such that $|q|$ is prime.

We note here that there is a conjecture stating that there are only finitely many $D(n)$-quadruples when $n$ is not a perfect square \cite[Conjecture 1.5.2]{DujeNovaKnjiga} and this conjecture is confirmed in \cite{BCM} for $q=-1, -4$ and in \cite{Brown} for $q\equiv 2 \Mod{4}$.

\section{Reducing the problem to congruences}\label{sec:equi}

The results of this paragraph hold for an arbitrary integer $q$ (not necessarily prime). We estimate the number of $D(q)$-pairs $(a,b)$ such that $a<b$ using the number of solutions $x$ of the equation \eqref{eq:01}.
Almost all such solutions induce a $D(q)$-pair $(a,b)$ such that $a\leq b,$ simply by setting $a=\frac{x^2-q}{b}.$
The almost part comes from the fact that $x^2-q$ can be negative, but the total amount of such cases for all $b\in \N$ is finite. Also, almost all $D(q)$ pairs $(a,b)$ such that $a<b$ are induced by a solution of Equation (\ref{eq:01}). It is possible that there exist pairs $(a,b), a<b$ such that $a>\frac{x^2-q}{b}$ for all solutions of Equation (\ref{eq:01}). Then there exists some $x\geq b+1$ such that $a=\frac{x^2-q}{b}.$ This leads to $b\geq a \geq \frac{(b+1)^2-q}{b}$, and in turn to $b\leq \frac{q-1}{2}.$ All in all, again only finitely many cases when $b$ runs through $\N.$ For the sake of our calculations, we identify the number of $D(q)$-pairs $(a,b)$ such that $a<b$ with the number of solutions of Equations (\ref{eq:01}).

\begin{lemma}\label{lema:prva} Let $q$ be an integer such that $|q|$ is prime and $b\in \N$ such that $\gcd(b,2q)=1.$ The number of solutions of the congruence
\begin{equation}
 x^2 \equiv 1 \Mod b
\label{eq:02}
\end{equation} such that $1\leq x \leq b$ is $2^{\omega(b)}.$ Consequently, the number of solutions of the congruence
\begin{equation}
 x^2 \equiv q \Mod b
\tag{\ref{eq:01}}
\end{equation} such that $1\leq x \leq b$ is either zero or $2^{\omega(b)}$.
\end{lemma}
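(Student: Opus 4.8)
The plan is to prove the statement in two stages: first establish the count $2^{\omega(b)}$ for the congruence $x^2 \equiv 1 \pmod{b}$ when $\gcd(b,2q)=1$, and then transfer this count to $x^2 \equiv q \pmod{b}$ using the group-theoretic structure of square roots. For the first part, I would factor $b = p_1^{e_1}\cdots p_k^{e_k}$ with $k = \omega(b)$ and apply the Chinese Remainder Theorem to reduce to counting solutions of $x^2 \equiv 1 \pmod{p_i^{e_i}}$ for each odd prime power. Since $\gcd(b,2q)=1$ forces every $p_i$ to be odd, the unit group $(\Z/p_i^{e_i}\Z)^\times$ is cyclic, so $x^2 \equiv 1$ has exactly two solutions ($x \equiv \pm 1$) modulo each $p_i^{e_i}$. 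Multiplying over the $k$ factors via CRT gives exactly $2^k = 2^{\omega(b)}$ solutions in a complete residue system, i.e.\ with $1 \le x \le b$.

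For the second part, I would argue as follows. If the congruence $x^2 \equiv q \pmod{b}$ has no solution, the count is zero and there is nothing to prove. Otherwise, fix one solution $x_0$, so $x_0^2 \equiv q \pmod b$; note that $\gcd(x_0,b)=1$ since $\gcd(q,b)=1$. Then $x$ is a solution of $x^2 \equiv q \pmod b$ if and only if $x^2 \equiv x_0^2 \pmod b$, which (again using $\gcd(x_0,b)=1$, so $x_0$ is invertible mod $b$) is equivalent to $(x x_0^{-1})^2 \equiv 1 \pmod b$. Hence $x \mapsto x x_0^{-1} \bmod b$ is a bijection between the solution set of \eqref{eq:01} and the solution set of \eqref{eq:02}, both taken in the complete residue system $\{1,\ldots,b\}$. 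By the first part, the latter has $2^{\omega(b)}$ elements, so the former does too. This gives the claimed dichotomy: the number of solutions of \eqref{eq:01} is either zero or $2^{\omega(b)}$.

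I do not anticipate a genuine obstacle here; the main point requiring care is bookkeeping about which residue system the count refers to (the paper's convention is $x \in [1,b]$, i.e.\ a complete residue system modulo $b$), and making sure the CRT correspondence and the translation-by-$x_0^{-1}$ map are genuinely bijections on that set rather than merely on residue classes. One should also explicitly note that the hypothesis $\gcd(b,2q)=1$ is used twice: the factor $2$ in the gcd guarantees $b$ is odd (so all prime power factors have cyclic unit groups and exactly two square roots of $1$), while the factor $q$ guarantees that any solution $x_0$ is a unit modulo $b$ (so that it can be inverted to set up the bijection). Everything else is routine.
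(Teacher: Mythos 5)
Your proof is correct and follows essentially the same route as the paper: the second half (fixing a solution $x_0$, noting it is a unit mod $b$, and transporting the solution set of $x^2\equiv q$ to that of $x^2\equiv 1$ by multiplication by $x_0^{-1}$) is exactly the paper's argument. The only difference is that for the first statement the paper simply cites Vinogradov, whereas you supply the standard proof via CRT and the cyclicity of $(\Z/p^e\Z)^\times$ for odd $p$; that is the same underlying argument, just written out.
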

\begin{proof}
The first statement of the lemma is proved in \cite[Section V.4]{Vinogradov}. If there is no solution to Equation \eqref{eq:01}, we are done. If there exists a solution $x_q,$ then every other solution $x'$ of Equation \eqref{eq:01} satisfies 
\begin{equation}
\left(\frac{x'}{x_q}\right)^2 \equiv 1 \Mod b,
\end{equation}
where division by $x_q$ corresponds to multiplying by the inverse of $x_q$ modulo $b$.
Also, if $x_1$ is any solution to Equation \eqref{eq:02}, then $x_1x_q$ is a solution of Equation \eqref{eq:01} and all solutions obtained in such a way have different residues \text{mod} $b.$
\end{proof}

We now give all the details for Theorem \ref{tm:pm2}. Estimating the number of $D(2)$-pairs is somewhat easier than estimating the number of $D(q)$-pairs for other prime $q$. However, as the proof of this theorem contains all the essential steps necessary for all other $q$, we believe that reading this first will make it easier for the reader to follow the more involved proofs.

\section{Estimating the number of \texorpdfstring{$D(2)$}{D(2)}-pairs and \texorpdfstring{$D(-2)$}{D(-2)}-pairs}

We first estimate $D_{2,2}(N)$, the number of $D(2)$-pairs up to $N$, by counting solutions of the congruence \eqref{eq:01} for $q=2$, and proceed similarly for $D_{2,-2}$.

\subsection{Existence and the number of congruence solutions}
In the next lemma, we record when the relevant congruence equation has a solution, as well as the number of its solutions.

\begin{lemma}\label{lemma:2-2}
For odd $b$, the equation
\begin{equation}\label{eq:for2}
 x^2 \equiv 2 \Mod b,
 \end{equation}
has a solution if and only if each prime factor $p$ of $b$ satisfies $p\equiv \pm 1 \Mod{8}$. For even $b$, equation \eqref{eq:for2} has a solution if and only if $2||b$ and each odd prime factor $p$ of $b$ satisfies $p\equiv \pm 1 \Mod{8}$.

Analogously, the equation
\begin{equation}\label{eq:forminus2}
x^2 \equiv -2 \Mod b 
\end{equation}
has a solution if and only if each prime factor $p$ of $b$ satisfies $p\equiv 1,3 \Mod{8}$. For even $b$, equation \eqref{eq:forminus2} has a solution if and only if $2||b$ and each odd prime factor $p$ of $b$ satisfies $p\equiv 1,3 \Mod{8}$.

When equation \eqref{eq:for2} or \eqref{eq:forminus2} is solvable with odd $b$, the number of its solutions $x$ such that $1\leqslant x \leqslant b$ is exactly $2^{\omega(b)}$.
\end{lemma}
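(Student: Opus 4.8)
The plan is to treat each congruence with the standard reduction via the Chinese Remainder Theorem and quadratic reciprocity. First I would handle the odd modulus case. Write $b = p_1^{e_1}\cdots p_k^{e_k}$ with $k=\omega(b)$. By CRT, the number of solutions of $x^2\equiv 2\pmod b$ is the product over $i$ of the number of solutions of $x^2\equiv 2\pmod{p_i^{e_i}}$. For an odd prime power $p^e$ with $p\nmid 2$, Hensel's lemma (or the cyclicity of $(\Z/p^e\Z)^\times$) tells us that $x^2\equiv 2\pmod{p^e}$ has exactly $2$ solutions if $2$ is a quadratic residue mod $p$, and none otherwise; and $2$ is a QR mod $p$ iff $\left(\tfrac{2}{p}\right)=1$ iff $p\equiv\pm1\pmod 8$ by the supplement to quadratic reciprocity. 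Similarly $\left(\tfrac{-2}{p}\right)=\left(\tfrac{-1}{p}\right)\left(\tfrac{2}{p}\right)=1$ iff $p\equiv 1,3\pmod 8$. Hence \eqref{eq:for2} (resp.\ \eqref{eq:forminus2}) is solvable for odd $b$ iff every prime factor satisfies the stated condition, and when solvable the solution count is $\prod 2 = 2^{\omega(b)}$. This simultaneously establishes the existence criterion and the final sentence of the lemma for odd $b$; alternatively one can invoke Lemma~\ref{lema:prva} directly once solvability is known.

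Next I would deal with even $b$. Write $b = 2^f m$ with $m$ odd and $f\geq 1$. By CRT the solution count is the product of the count mod $2^f$ and the count mod $m$, and the latter is covered by the odd case. For the $2$-part: $x^2\equiv 2\pmod 2$ has the solution $x=0$... actually $2\equiv 0$, so I need $x^2\equiv 0\pmod 2$, giving one solution mod $2$; but $x^2\equiv 2\pmod 4$ forces $x$ even, so $x^2\equiv 0\pmod 4\neq 2$, hence no solution mod $4$, and a fortiori none mod $2^f$ for $f\geq 2$. Thus the $2$-part is solvable iff $f=1$, i.e.\ $2\|b$, and then contributes exactly one solution mod $2$. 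The same analysis works verbatim for $-2$ since $-2\equiv 0\pmod 2$ and $-2\equiv 2\pmod 4$. This yields the even-$b$ criteria. One subtlety worth a sentence: when $2\|b$ the number of solutions in $[1,b]$ is still $2^{\omega(b)}$ only if we are careful — but here $\omega(b)=\omega(m)+1$ while the $2$-part contributes a factor $1$, not $2$, so the total is $2^{\omega(m)}=2^{\omega(b)-1}$; the lemma's last sentence is therefore explicitly restricted to odd $b$, which is consistent.

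The main obstacle, such as it is, is bookkeeping rather than mathematics: one must be careful about the difference between counting solutions modulo $b$ (a complete residue system) versus in the interval $[1,b]$ — these agree — and about the off-by-a-factor-of-$2$ discrepancy at the prime $2$, making sure the final sentence is stated only for odd $b$. I would also double-check the congruence conditions $p\equiv\pm1\pmod 8$ and $p\equiv1,3\pmod 8$ against the formulas $\left(\tfrac{2}{p}\right)=(-1)^{(p^2-1)/8}$ and $\left(\tfrac{-1}{p}\right)=(-1)^{(p-1)/2}$, since these are the only place an error could creep in. Everything else — CRT, Hensel lifting for odd primes, the trivial $2$-adic computation — is routine.
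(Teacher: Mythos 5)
Your proof is correct and follows essentially the same route as the paper: the supplements to quadratic reciprocity for $(\tfrac{2}{p})$ and $(\tfrac{-2}{p})$ give the conditions on odd prime factors, the impossibility of $x^2\equiv 2\pmod 4$ gives the $2\|b$ condition, and the count $2^{\omega(b)}$ for odd $b$ is obtained. The only difference is that you spell out the CRT/Hensel lifting and the solution count directly, whereas the paper leaves the sufficiency implicit and cites its Lemma~\ref{lema:prva} for the count; both are valid.
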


\begin{proof} Since $b\mid x^2-2$ implies that for each prime factor $p$ of $b$ it holds that $p \mid x^2-2$, we have to check for which primes $p$ is $2$ a quadratic residue.
The statement on the existence of solutions then holds because $x^2-2$ cannot be divisible by $4$ and because \[ \left(\frac{2}{p}\right) = (-1)^{\frac{p^2-1}{8}}, \]
and analogously for $-2$. The last statement follows from Lemma \ref{lema:prva}.
\end{proof}

The previous lemma motivates us to define the set of \emph{good primes} as \[ \G_2 = \{p \in \P \colon p \equiv \pm 1 \Mod{8} \}.\]
The set of good primes for $-2$ is given by \[ \G_{-2} = \{p \in \P \colon p \equiv 1, 3 \Mod{8}\}.\]

The sum $\displaystyle \sum_{n=1}^N 2^{\omega(n)}$ is already estimated in \cite{DujeGlavni}, but now we have to estimate a weighted version of this sum. The weights are binary, i.e.\ non-zero if $n$ consists only of good prime factors:
\[
\lambda_{\mathcal{G}_2}(n)=
	\begin{cases}
		1, \quad\text{ if } n=p_1^{\alpha_1}\dots p_k^{\alpha_k}, \quad p_i\in \mathcal{G}_2, \, \forall i=1,\dotsc,k,\\
		0, \quad\text{ otherwise.}
	\end{cases}
\]

To begin estimating the weighted sum, we define \[ b_2(n)=2^{\omega(n)}\cdot \lambda_{\G_2}(n). \]
If $n$ has only good prime factors, then $b_2(n)$ is equal to the number of solutions to congruence $x^2\equiv 2 \Mod{n}$; otherwise the value of $b_2(n)$ is zero.
We want to estimate the weighted sum
\[
\displaystyle B_2(N)= \sum_{1\leq n \leq N} 2^{\omega(n)}\cdot \lambda_{\mathcal{G}_2}(n)=\sum_{1\leq n \leq N} b_2(n).
\] $B_2(N)$ counts the total number of solutions $x \in \{1, \dotsc, n\}$ of all congruences $x^2 \equiv 2 \Mod{n}$ where $n$ is odd and $1 \leq n\leq N.$ We will account for the possibility of $2||n$ later, so understanding the asymptotic behavior of $B_2(N)$ will be enough to understand $D_{2,2}(N).$

\subsection{Dirichlet series manipulation}
To understand the asymptotic behavior of $B_2(N)$, we analyze the behavior of the Dirichlet series $\beta_2(s) = \DD b_2(s)$, where
\[ \beta_2(s) = \DD b_2(s) = \sum_{n=1}^\infty \frac{b_2(n)}{n^s} = \sum_{n=1}^\infty \frac{2^{\omega(n)} \lambda_{\mathcal{G}_2}(n)}{n^s} .\] The next lemma will be used throughout the following sections as well, so we state it in a more general manner.

\begin{lemma}\label{lema:omjerzeta}
Let $\G$ be a set of primes called \emph{good primes}. Let $\lambda_G \colon \N \to \{0, 1\}$ be the indicator function of a multiplicative monoid in $\N$ generated by $\G$. Then the Dirichlet series $\beta(s)$ of $b(n)=2^{\omega(n)}\cdot \lambda_{\G}(n)$ satisfies
\[
\beta(s)=\frac{\zzg^2(s)}{\zzg(2s)},
\] for $\Re{s}>1,$
where $\zzg(s)$ is
\[ \zzg(s):=\DD \lambda_\mathcal{G}(s)=\sum_{n=1}^\infty \frac{\lambda_{\mathcal{G}}(n)}{n^s}. \]
\end{lemma}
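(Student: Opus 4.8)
The plan is to establish the Euler product for $\beta(s)$ and then recognize the two factors as powers of $\zzg$. First I would note that since $b(n) = 2^{\omega(n)}\lambda_\G(n)$ is multiplicative (both $2^{\omega(n)}$ and the indicator $\lambda_\G$ are multiplicative, the latter because $\G$ generates a multiplicative monoid, so $\lambda_\G(mn) = \lambda_\G(m)\lambda_\G(n)$ for coprime $m,n$), the Dirichlet series factors as an Euler product over primes for $\Re s > 1$, where absolute convergence is guaranteed by the crude bound $2^{\omega(n)} \leq d(n) \ll_\varepsilon n^\varepsilon$. The local factor at a prime $p$ is
\[
\sum_{k=0}^\infty \frac{b(p^k)}{p^{ks}}.
\]
For $p \notin \G$ we have $\lambda_\G(p^k) = 0$ for all $k \geq 1$, so the local factor is just $1$. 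For $p \in \G$ we have $\lambda_\G(p^k) = 1$ and $2^{\omega(p^k)} = 2$ for every $k \geq 1$, so the local factor is
\[
1 + \sum_{k=1}^\infty \frac{2}{p^{ks}} = 1 + \frac{2p^{-s}}{1 - p^{-s}} = \frac{1 + p^{-s}}{1 - p^{-s}}.
\]

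Next I would rewrite this local factor to expose the $\zzg^2/\zzg(2s)$ shape. Using $1 - p^{-2s} = (1-p^{-s})(1+p^{-s})$, we get
\[
\frac{1 + p^{-s}}{1 - p^{-s}} = \frac{(1+p^{-s})^2}{1 - p^{-2s}} = \frac{(1 - p^{-2s})}{(1 - p^{-s})^2} = \frac{(1-p^{-s})^{-2}}{(1-p^{-2s})^{-1}}.
\]
Meanwhile $\zzg(s) = \DD\lambda_\G(s)$ is itself multiplicative with local factor $\sum_{k\geq 0} p^{-ks} = (1-p^{-s})^{-1}$ at $p \in \G$ and local factor $1$ at $p \notin \G$; hence $\zzg(s) = \prod_{p \in \G}(1-p^{-s})^{-1}$, $\zzg(s)^2 = \prod_{p\in\G}(1-p^{-s})^{-2}$, and $\zzg(2s) = \prod_{p\in\G}(1-p^{-2s})^{-1}$, all for $\Re s > 1$ (for $\zzg(2s)$ even for $\Re s > 1/2$, but $\Re s > 1$ suffices). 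Taking the product over $p \in \G$ of the local identity above and noting that the primes $p \notin \G$ contribute $1$ to every series in sight gives
\[
\beta(s) = \prod_{p \in \G} \frac{(1-p^{-s})^{-2}}{(1-p^{-2s})^{-1}} = \frac{\prod_{p\in\G}(1-p^{-s})^{-2}}{\prod_{p\in\G}(1-p^{-2s})^{-1}} = \frac{\zzg(s)^2}{\zzg(2s)}
\]
for $\Re s > 1$, which is the claim. One should justify that the quotient of the two convergent Euler products equals the Euler product of the quotient of local factors; since $\zzg(2s)$ has no zeros in $\Re s > 1$ (each factor $(1-p^{-2s})^{-1}$ is nonzero and the product converges to a nonzero limit), this rearrangement is valid.

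The only mild obstacle is bookkeeping: one must be careful that $\lambda_\G$ really is multiplicative (this is exactly the hypothesis that $\lambda_\G$ is the indicator of the monoid generated by $\G$, i.e.\ $n$ is counted iff every prime dividing $n$ lies in $\G$, with no constraint on exponents), and that the manipulations of infinite products are legitimate in the region of absolute convergence $\Re s > 1$. There is no analytic subtlety here — no continuation, no tauberian input — just the formal Euler-product identity; the tauberian extraction of the asymptotics of $B(N)$ from $\beta(s)$ is deferred to later in the paper.
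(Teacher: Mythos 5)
Your proof is correct. It reaches the same identity by a slightly different route than the paper: you expand $\beta(s)$ directly as an Euler product, using that $b(n)=2^{\omega(n)}\lambda_{\G}(n)$ is multiplicative, and compute the local factor $\frac{1+p^{-s}}{1-p^{-s}}=\frac{(1-p^{-s})^{-2}}{(1-p^{-2s})^{-1}}$ at each $p\in\G$. The paper instead first writes $b$ as a Dirichlet convolution $b=\mu_{\G}^{2}*\lambda_{\G}$, where $\mu_{\G}$ is a $\G$-modified M\"obius function, uses $\DD(\mu_{\G}^{2}*\lambda_{\G})=\DD\mu_{\G}^{2}\cdot\DD\lambda_{\G}$, and only then applies the Euler product to the single factor $\DD\mu_{\G}^{2}=\prod_{p\in\G}(1+p^{-s})=\zzg(s)/\zzg(2s)$. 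The two arguments are computationally equivalent --- your local factor is exactly the product of the paper's two local factors $(1+p^{-s})$ and $(1-p^{-s})^{-1}$ --- but yours skips the convolution identity $2^{\omega(n)}=\sum_{d\mid n}\mu_{\G}^{2}(d)$ and is somewhat more self-contained, while the paper's version isolates the factor $\zzg(s)/\zzg(2s)$ as the transform of a recognizable arithmetic function. Your attention to the justification of the Euler product (absolute convergence via $2^{\omega(n)}\leq d(n)\ll_{\varepsilon}n^{\varepsilon}$, which is the paper's appendix Lemma \ref{ood1} and Theorem \ref{tm:MV}) and to the nonvanishing of $\zzg(2s)$ is appropriate and matches what the paper relies on.
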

\begin{proof}
Since Dirichlet series behave nicely with respect to Dirichlet convolution, we wish to express $b(n)$ as a convolution of two arithmetic functions. One of these functions will be \emph{the $\mathcal{G}$-modified M\" obius function} which we define as
\[
\mu_\mathcal{G}(n)=
	\begin{cases}
		(-1)^{\omega(n)}, \quad\text{ if } n=p_1\dots p_k, \quad p_i\in \mathcal{G}, \, \forall i=1,\dotsc,k, \text{ and } p_i\neq p_j \text{  whenever } i\neq j\\
		\quad \quad \quad \;0, \quad\text{ otherwise}.
	\end{cases}.
\]
Now we can express
\begin{align*}
b(n) &= 2^{\omega(n)}\cdot \lambda_\mathcal{G}(n)= \sum_{d|n} \mu^2_{\mathcal{G}}(d) \cdot \lambda_\mathcal{G}(n) \\
&\stackrel{(**)}{=} \sum_{d|n} \mu^2_{\mathcal{G}}(d) \cdot \lambda_\mathcal{G}\left(\frac{n}{d}\right) =\left(\mu^2_{\mathcal{G}} * \lambda_\mathcal{G}\right)(n)
\end{align*}

\noindent where equality $(**)$ holds because of the following fact: If $n$ only has good prime factors, then $\lambda_\mathcal{G}(\frac{n}{d})=\lambda_\mathcal{G}(n)$ for any $d$ such that $d|n.$ If $n$ has at least one bad prime factor, then $\lambda_\mathcal{G}(n)=0,$ as well as $\mu^2_{\mathcal{G}}(d)\cdot\lambda_\mathcal{G}(\frac{n}{d}) =0.$

Since $\displaystyle \DD (\mu^2_{\mathcal{G}}*\lambda_{\mathcal{G}}) (s)=\DD \mu^2_{\mathcal{G}} (s) \DD \lambda_{\mathcal{G}} (s),$ we only need to calculate $\DD \mu^2_{\mathcal{G}} (s).$
As $\mu^2_\G$ is multiplicative, we can expand $\DD \mu^2_{\mathcal{G}} (s)$ into an Euler product (see e.g.~\cite[Theorem 1.9]{MontV} -- this theorem is also stated herein at the end of the Appendix as Theorem \ref{tm:MV}) to obtain

$\displaystyle \DD(\mu^2_{\mathcal{G}})=\prod_{p\in \mathcal{G}} \left(1+\frac{1}{p^s}\right)=\frac{\prod_{p\in \mathcal{G}} \left(1-\frac{1}{p^{2s}}\right)}{\prod_{p\in \mathcal{G}} \left(1-\frac{1}{p^s}\right)}=\frac{\prod_{p\in \mathcal{G}} \left(1-\frac{1}{p^s}\right)^{-1}}{\prod_{p\in \mathcal{G}} \left(1-\frac{1}{p^{2s}}\right)^{-1}}=\frac{\zeta_{\mathcal{G}}(s)}{\zeta_{\mathcal{G}}(2s)}$ .
\end{proof}

We obtain the following corollary by noting that our $\lambda_{\G_2}$ and $\lambda_{\G_{-2}}$ are indicator functions as required by the previous lemma.

\begin{corollary}\label{kor:Zeta2Trunc}
The Dirichlet series $\beta_2(s)$ and $\beta_{-2}(s)$ satisfy
\begin{equation}
    \beta_2(s)=\frac{\zzgdva^2(s)}{\zzgdva(2s)}, \quad \beta_{-2}(s)=\frac{\zzgminusdva^2(s)}{\zzgminusdva(2s)},
\end{equation}
where $\zzgdva$ and $\zzgminusdva$ are the Dirichlet series of $\lambda_{\G_2}$ and $\lambda_{\G_{-2}}$.
\end{corollary}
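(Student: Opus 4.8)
The plan is to obtain the corollary as an immediate specialization of Lemma \ref{lema:omjerzeta}, applied once with $\G = \G_2$ and once with $\G = \G_{-2}$. The only point that needs checking is that the hypothesis of that lemma is met: that $\lambda_{\G_2}$ and $\lambda_{\G_{-2}}$ are the indicator functions of the multiplicative monoids in $\N$ generated by $\G_2$ and $\G_{-2}$ respectively.

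First I would record that, directly from its definition, $\lambda_{\G_2}(n) = 1$ exactly when every prime dividing $n$ is congruent to $\pm 1 \Mod{8}$ (with $\lambda_{\G_2}(1) = 1$, the empty product of good primes), and $\lambda_{\G_2}(n) = 0$ otherwise; this is precisely the indicator of the submonoid of $(\N, \cdot)$ generated by $\G_2 = \{p \in \P : p \equiv \pm 1 \Mod{8}\}$, and the same description applies verbatim to $\lambda_{\G_{-2}}$ with $\G_{-2} = \{p \in \P : p \equiv 1, 3 \Mod{8}\}$. Lemma \ref{lema:omjerzeta} therefore applies to $b_2(n) = 2^{\omega(n)}\lambda_{\G_2}(n)$ and to $b_{-2}(n) = 2^{\omega(n)}\lambda_{\G_{-2}}(n)$, and yields $\beta_2(s) = \zzgdva^2(s)/\zzgdva(2s)$ and $\beta_{-2}(s) = \zzgminusdva^2(s)/\zzgminusdva(2s)$ for $\Re s > 1$.

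For the region of validity I would note that $0 \leq b_{\pm 2}(n) \leq 2^{\omega(n)}$ and that $\sum_{n=1}^\infty 2^{\omega(n)} n^{-s} = \zeta(s)^2/\zeta(2s)$ converges for $\Re s > 1$, so $\beta_2$ and $\beta_{-2}$ converge absolutely there and the displayed identities hold as equalities of holomorphic functions in that half-plane, consistently with Lemma \ref{lema:omjerzeta}. There is essentially no obstacle here: the corollary is bookkeeping on top of Lemma \ref{lema:omjerzeta}, the good-prime sets having already been pinned down by the solvability conditions in Lemma \ref{lemma:2-2}. The single line that genuinely deserves attention is the verification that $\lambda_{\G_{\pm 2}}$ are multiplicative indicator functions of exactly those monoids, which is transparent from their definitions.
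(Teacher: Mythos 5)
Your proposal is correct and matches the paper's own (one-line) derivation: the corollary is obtained exactly by noting that $\lambda_{\G_2}$ and $\lambda_{\G_{-2}}$ are the indicator functions of the multiplicative monoids generated by $\G_2$ and $\G_{-2}$, so Lemma \ref{lema:omjerzeta} applies directly. Your added convergence check is harmless extra bookkeeping already covered by Corollary \ref{kor:holo}.
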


Using the previous corollary, we show how $\beta_2$ and $\beta_{-2}$ can be expressed in terms of the classical zeta function and the $L$-functions of certain Dirichlet characters.

\begin{lemma}\label{lema:zeteZa2} The following holds.
\begin{itemize}
\item[a)] The Dirichlet series $\beta_2(s)=\DD b_2(s)$ of $b_2(n)=2^{\omega(n)}\cdot \lambda_{\G_2}(n)$ satisfies
\[\beta_2(s) = \frac{\zeta(s)}{\zeta(2s)} \cdot \frac{L(s,\chi_{8,5})}{(1+2^{-s})}.  \]
\item[b)] The Dirichlet series $\beta_{-2}(s) = \DD b_{-2}(s)$ of $b_{-2}(n)=2^{\omega(n)}\cdot \lambda_{\G_{-2}}(n)$ satisfies
\[\beta_{-2}(s) = \frac{\zeta(s)}{\zeta(2s)} \cdot  \frac{L(s,\chi_{8,3})}{(1+2^{-s})}.  \]
\end{itemize}
\end{lemma}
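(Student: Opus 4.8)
The plan is to pass to Euler products, which converge absolutely for $\Re s>1$, and then compare factors one prime at a time. First I would feed Corollary~\ref{kor:Zeta2Trunc} into the Euler factorisation of $\zzgdva$: since $\lambda_{\G_2}$ is multiplicative with $\lambda_{\G_2}(p^k)=1$ for $p\in\G_2$ and $0$ otherwise, $\zzgdva(s)=\prod_{p\in\G_2}(1-p^{-s})^{-1}$, whence
\[
\beta_2(s)=\frac{\zzgdva^2(s)}{\zzgdva(2s)}=\prod_{p\in\G_2}\frac{(1-p^{-s})^{-2}}{(1-p^{-2s})^{-1}}=\prod_{p\in\G_2}\frac{1+p^{-s}}{1-p^{-s}},
\]
and likewise $\beta_{-2}(s)=\prod_{p\in\G_{-2}}\frac{1+p^{-s}}{1-p^{-s}}$. (This matches the direct expectation that the local factor of $b_2$ at a good prime is $1+2p^{-s}+2p^{-2s}+\cdots=\frac{1+p^{-s}}{1-p^{-s}}$.) So part a) reduces to showing this product equals $\frac{\zeta(s)}{\zeta(2s)}\cdot\frac{L(s,\chi_{8,5})}{1+2^{-s}}$, and part b) the analogous statement with $\G_{-2}$ and $\chi_{8,3}$.

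For the right-hand side I would use the classical identity $\zeta(s)/\zeta(2s)=\sum_{n\geq1}\mu^2(n)n^{-s}=\prod_p(1+p^{-s})$; dividing by $1+2^{-s}$ deletes the prime $2$, so $\frac{\zeta(s)}{\zeta(2s)(1+2^{-s})}=\prod_{p\neq2}(1+p^{-s})$. Multiplying by $L(s,\chi_{8,5})=\prod_{p\neq2}(1-\chi_{8,5}(p)p^{-s})^{-1}$ gives
\[
\frac{\zeta(s)}{\zeta(2s)}\cdot\frac{L(s,\chi_{8,5})}{1+2^{-s}}=\prod_{p\neq2}\frac{1+p^{-s}}{1-\chi_{8,5}(p)\,p^{-s}}.
\]
The factor at an odd prime $p$ is $\frac{1+p^{-s}}{1-p^{-s}}$ when $\chi_{8,5}(p)=1$ and collapses to $\frac{1+p^{-s}}{1+p^{-s}}=1$ when $\chi_{8,5}(p)=-1$, so only the primes with $\chi_{8,5}(p)=1$ survive; these are exactly the good primes $p\equiv\pm1\Mod{8}$, and the product reproduces $\beta_2(s)$. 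Part b) is identical, using $\chi_{8,3}(p)=1$ precisely for $p\equiv1,3\Mod{8}$.

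The only real content beyond this bookkeeping is pinning down the Dirichlet characters in the LMFDB/Conrey convention. I would need to check that $\chi_{8,5}$ is the primitive quadratic character modulo $8$ coinciding with the Kronecker symbol $n\mapsto\left(\frac{2}{n}\right)$ — so that $\chi_{8,5}(p)=1\iff p\equiv\pm1\Mod{8}$ — and that $\chi_{8,3}$ is $n\mapsto\left(\frac{-2}{n}\right)$, so that $\chi_{8,3}(p)=1\iff p\equiv1,3\Mod{8}$; this follows by writing each class $n\in(\Z/8\Z)^\times$ as $(-1)^{a(n)}5^{b(n)}$ and evaluating $\chi_{8,n}(m)=(-1)^{a(n)a(m)+b(n)b(m)}$. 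Reassuringly, the two sets of good primes so obtained are exactly those appearing in the solvability criteria of Lemma~\ref{lemma:2-2}. All the Euler-product manipulations are legitimate on $\Re s>1$ by absolute convergence, so no analytic subtlety enters here — that is left to the later Tauberian step.
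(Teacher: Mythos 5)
Your proposal is correct and follows essentially the same route as the paper: both arguments rest on the Euler product $\beta_2(s)=\prod_{p\in\G_2}\frac{1+p^{-s}}{1-p^{-s}}$ coming from Corollary~\ref{kor:Zeta2Trunc}, the identity $\zeta(s)/\zeta(2s)=\prod_p(1+p^{-s})$, and the fact that $\chi_{8,5}$ (resp.\ $\chi_{8,3}$) equals $+1$ exactly on the good primes. The only difference is organizational — the paper massages the left-hand side through products over residue classes mod $8$ until $L(s,\chi_{8,5})/L(s,\chi_{8,1})$ appears, while you expand both sides as Euler products and match local factors — which is a cosmetic distinction.
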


\begin{proof}
We begin proving a) by complementing the Euler product of $\zzgdva$ from Corollary \ref{kor:Zeta2Trunc} to obtain the usual zeta function:
\begin{align*}
    \zzgdva(s) &= \prod_{p\in\G_2} (1-p^{-s})^{-1} = \zeta(s) \prod_{p\notin\G_2}(1-p^{-s}) \\
    &= \zeta(s)(1-2^{-s})\prod_{p\equiv 3}(1-p^{-s})\prod_{p\equiv 5}(1-p^{-s}),
\end{align*}
where the products go over all primes $p$ congruent to $3$ and $5$ modulo $8$ (according to our description of $\G_2$, the primes that are not in $\G_2$ include $2$ and all primes of this form). The further products will also go over congruences modulo $8$.

We now rewrite
\begin{align*}
 \frac{\zzgdva^2(s)}{\zzgdva(2s)} &= \frac{\zeta^2(s)}{\zeta(2s)} \cdot \frac{(1-2^{-s})^2}{(1-2^{-2s})} \cdot \frac{\displaystyle{\prod_{p\equiv 3}(1-p^{-s})^2\prod_{p\equiv 5}(1-p^{-s})^2}}{\displaystyle{\prod_{p\equiv 3}(1-p^{-2s})\prod_{p\equiv 5}(1-p^{-2s})}} \\
 &= \frac{\zeta^2(s)}{\zeta(2s)} \cdot \frac{(1-2^{-s})}{(1+2^{-s})} \cdot \frac{\displaystyle{\prod_{p\equiv 3}(1-p^{-s})\prod_{p\equiv 5}(1-p^{-s})}}{\displaystyle{\prod_{p\equiv 3}(1+p^{-s}) \,\, \prod_{p\equiv 5}(1+p^{-s})}}.
 \end{align*}
 We invert our products and complement them with the remaining possible remainder of an odd prime modulo $8$:
 \begin{align*}
 \frac{\zzgdva^2(s)}{\zzgdva(2s)} &= \frac{\zeta^2(s)}{\zeta(2s)} \cdot \frac{(1-2^{-s})}{(1+2^{-s})} \cdot \frac{\displaystyle{
 \prod_{p\equiv 1}(1-p^{-s})^{-1}
 \prod_{p\equiv 3}(1+p^{-s})^{-1}
 \prod_{p\equiv 5}(1+p^{-s})^{-1}
 \prod_{p\equiv 7}(1-p^{-s})^{-1}
 }}{\displaystyle{
 \prod_{p\equiv 1}(1-p^{-s})^{-1}
 \prod_{p\equiv 3}(1-p^{-s})^{-1}
 \prod_{p\equiv 5}(1-p^{-s})^{-1}
 \prod_{p\equiv 7}(1-p^{-s})^{-1}
 }} \\
 &= \frac{\zeta^2(s)}{\zeta(2s)} \cdot \frac{(1-2^{-s})}{(1+2^{-s})} \cdot \frac{L(s,\chi_{8,5})}{L(s,\chi_{8,1})} = \frac{\zeta(s)}{\zeta(2s)} \cdot \frac{L(s,\chi_{8,5})}{(1+2^{-s})}.
 \end{align*}

For b) part about $-2$, the proof is completely analogous, and the character $\chi_{8,3}$ appears instead of $\chi_{8,5}$ due to a different set of good primes $\G_{-2}$.
\end{proof}

Our Dirichlet series $\beta_2(s)$ and $\beta_{-2}(s)$ are holomorphic in the region $\Re s > 1$ by a standard analytic argument given in the Appendix as Corollary \ref{kor:holo}. The previous lemma also shows that these series have holomorphic extensions for $\Re s \geqslant 1$, except at $s=1$, which we will use in the next subsection.

\subsection{The total number of solutions of all congruences with odd moduli}

The asymptotic behaviour of $\displaystyle B_2(N)=\sum_{1\leq n \leq N} b_2(n)$ is a direct consequence of a corollary of a theorem by Wiener and Ikehara.

\begin{theorem}[Corollary of Wiener-Ikehara \cite{WIK}]
Let  $a(n) \geq 0$. If the Dirichlet series of the form

\[ \sum_{n=1}^\infty a(n) n^{-s} \]

\noindent converges to an analytic function in the half-plane $ \Re(s) \geq 1$ with a simple pole of residue $c$ at $s=1$, then

\[ \sum_{n\leq N}a(n) \sim cN.\]
\end{theorem}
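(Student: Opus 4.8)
This is the Wiener--Ikehara Tauberian theorem in the special case of a simple pole, and the plan is to run its classical proof. Write $\psi(x)=\sum_{n\le x}a(n)$; since $a(n)\ge 0$ this is a nondecreasing step function, and that monotonicity is precisely the Tauberian input that upgrades analytic information about the Dirichlet series to a genuine asymptotic for $\psi$. Passing to the additive variable $x=e^{u}$, Abel summation gives $F(s)/s=\int_{0}^{\infty}A(u)e^{-(s-1)u}\,du$ for $\Re s>1$, where $A(u):=e^{-u}\psi(e^{u})\ge 0$; the assertion $\sum_{n\le N}a(n)\sim cN$ is equivalent to $A(u)\to c$ as $u\to\infty$. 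I would then form $H(z):=\frac{F(1+z)}{1+z}-\frac{c}{z}=\int_{0}^{\infty}(A(u)-c)e^{-zu}\,du$ (valid for $\Re z>0$), and note that since $F$ is analytic on a neighbourhood of $\{\Re s\ge 1\}$ apart from the simple pole of residue $c$ at $s=1$, the pole of $\frac{F(1+z)}{1+z}$ at $z=0$ cancels against $\frac{c}{z}$, so $H$ extends holomorphically across the imaginary axis and, in particular, is bounded on compact subsets of it.

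The core of the argument is a Fej\'er-kernel smoothing that avoids ever needing a growth bound for $F$ on the line $\Re s=1$. For a parameter $\lambda>0$ and $y>0$ I would consider, for $\varepsilon>0$, the integral $J_{\varepsilon}:=\frac{1}{2\pi}\int_{-2\lambda}^{2\lambda}H(\varepsilon+it)\left(1-\frac{|t|}{2\lambda}\right)e^{iyt}\,dt$. Inserting the integral representation of $H$ and interchanging the two integrals (absolutely convergent for $\varepsilon>0$, since $A(u)\ll_{\delta}e^{\delta u}$ for every $\delta>0$ by the crude bound $\psi(x)\le x^{1+\delta}F(1+\delta)$), the $t$-integration produces the Fej\'er kernel and gives $J_{\varepsilon}=\frac{\lambda}{\pi}\int_{0}^{\infty}(A(u)-c)e^{-\varepsilon u}\left(\frac{\sin\lambda(y-u)}{\lambda(y-u)}\right)^{2}\,du$. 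Letting $\varepsilon\to 0^{+}$ on the $t$-side is legitimate by dominated convergence ($H$ is bounded on the relevant compact set), yielding the limit $\frac{1}{2\pi}\int_{-2\lambda}^{2\lambda}H(it)\left(1-\frac{|t|}{2\lambda}\right)e^{iyt}\,dt$, which by the Riemann--Lebesgue lemma tends to $0$ as $y\to\infty$ for each fixed $\lambda$. On the $u$-side the passage $\varepsilon\to 0^{+}$ is handled by monotone convergence, using $A\ge 0$: finiteness of the $t$-side limit forces $\int_{0}^{\infty}A(u)\left(\frac{\sin\lambda(y-u)}{\lambda(y-u)}\right)^{2}\,du<\infty$, and assembling everything yields, for each fixed $\lambda>0$, that $\frac{\lambda}{\pi}\int_{0}^{\infty}A(u)\left(\frac{\sin\lambda(y-u)}{\lambda(y-u)}\right)^{2}\,du\to c$ as $y\to\infty$; that is, a Fej\'er average of $A$ about $y$ converges to $c$.

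The final step is the Tauberian upgrade from averaged to pointwise convergence, and here positivity re-enters through the monotonicity of $\psi$: for $u_{1}\le u_{2}$ one has $A(u_{1})\le e^{u_{2}-u_{1}}A(u_{2})$, so on any interval of length $\le 2\delta$ the function $A$ differs from a nondecreasing function by at most the factor $e^{2\delta}$. A first, crude comparison of $A(y)$ with its average (using only $A\ge 0$) shows $\limsup_{u}A(u)<\infty$; once $A$ is known to be bounded, the kernel $K_{\lambda}(v)=\frac{\lambda}{\pi}\left(\frac{\sin\lambda v}{\lambda v}\right)^{2}$ acts as a genuine approximate identity (total mass $1$, tails $\int_{|v|>\delta}K_{\lambda}\to 0$ as $\lambda\to\infty$), so the averaged convergence localises to short windows about $y$, and the ``almost nondecreasing'' property then squeezes out $\liminf_{u}A(u)\ge c$ and $\limsup_{u}A(u)\le c$. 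Hence $A(u)\to c$, i.e.\ $\sum_{n\le N}a(n)\sim cN$.

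The step I expect to be the main obstacle is exactly the one the Fej\'er smoothing is designed to circumvent: one would like to recover the main term $cN$ by shifting a Perron-type contour from $\Re s>1$ to $\Re s=1$, but the hypothesis supplies only an analytic continuation of $F$ with no control on its size along that line. Multiplying by the compactly supported factor $1-\frac{|t|}{2\lambda}$ replaces every integral by one over a bounded set, so no such growth estimate is needed. The hypothesis $a(n)\ge 0$ is used in two essential places: to justify the $\varepsilon\to 0^{+}$ limit on the Dirichlet-integral side by monotone convergence, and, via the induced monotonicity of $\psi$, to convert averaged convergence into pointwise convergence at the end. (In the applications in this paper $F(s)$ is an explicit quotient of $\zeta$- and $L$-functions of standard polynomial growth on $\Re s=1$, so one could alternatively run a direct contour-shift argument; but the statement as given is precisely Wiener--Ikehara, and invoking it is the cleanest route.)
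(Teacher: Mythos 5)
The paper does not prove this statement at all: it is quoted as a known result (the Wiener--Ikehara Tauberian theorem) with a citation to Ikehara's 1931 paper, and is then used as a black box in Proposition \ref{prop:07} and its analogues. Your proposal supplies the standard classical proof of that theorem --- Abel summation to get $F(s)/s=\int_0^\infty A(u)e^{-(s-1)u}\,du$, subtraction of the polar part to obtain a function $H$ holomorphic across $\Re s=1$, Fej\'er-kernel smoothing to avoid any growth estimate on the line, monotone convergence (using $a(n)\ge 0$) to pass to $\varepsilon=0$ on the integral side, Riemann--Lebesgue, and finally the Tauberian step exploiting the monotonicity of $\psi$ to pass from averaged to pointwise convergence of $A$. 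This outline is correct and is essentially the textbook argument (as in Montgomery--Vaughan or Korevaar), so there is nothing to object to; the only stylistic remark is that the hypothesis as stated in the paper is the usual slightly loose phrasing (``analytic \dots with a simple pole''), and your reading of it --- $F(s)-c/(s-1)$ extends holomorphically, or at least continuously, to the closed half-plane $\Re s\ge 1$ --- is exactly what is available in the paper's applications, where $\beta_q$ is an explicit quotient of $\zeta$- and $L$-functions meromorphic on $\Re s>\tfrac12$. In short: you proved a theorem the authors only cite, and your proof is sound.
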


Let us remind the reader that $B_2(N)$ counts the total number of solutions $x \in \{1, \dotsc, n\}$ of all congruences $x^2 \equiv 2 \Mod{n}$ where $n$ is odd and $1 \leq n\leq N.$

\begin{proposition}\label{prop:07} The following holds.
\begin{itemize}
    \item[a)] The partial sums of $b_{2}(n)$ satisfy
\[ B_2(N) \sim \frac{2L(1,\chi_{8,5})}{3\zeta(2)}\cdot N \approx 0.25258N .\]
    \item[b)] The partial sums of $b_{-2}(n)$ satisfy
    \[ B_{-2}(N) \sim \frac{2L(1,\chi_{8,3})}{3\zeta(2)}\cdot N \approx 0.45016N.\]
\end{itemize}
\end{proposition}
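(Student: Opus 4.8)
The plan is to apply the Wiener--Ikehara corollary stated just above to the nonnegative arithmetic function $a(n)=b_{\pm 2}(n)=2^{\omega(n)}\lambda_{\G_{\pm 2}}(n)$, so the whole argument reduces to verifying the hypotheses of that theorem and then reading off the residue $c$. By Lemma \ref{lema:zeteZa2}, we already have the closed form
\[ \beta_2(s)=\frac{\zeta(s)}{\zeta(2s)}\cdot\frac{L(s,\chi_{8,5})}{1+2^{-s}},\qquad \beta_{-2}(s)=\frac{\zeta(s)}{\zeta(2s)}\cdot\frac{L(s,\chi_{8,3})}{1+2^{-s}}. \]
First I would record that $\beta_{\pm 2}(s)$ is given by an absolutely convergent Dirichlet series for $\Re s>1$ (this is the standard argument referenced as Corollary \ref{kor:holo}, since $0\le b_{\pm 2}(n)\le 2^{\omega(n)}=O_\varepsilon(n^\varepsilon)$), and then argue that the right-hand side provides the required analytic continuation to the closed half-plane $\Re s\ge 1$ with a single simple pole at $s=1$. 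Concretely: $\zeta(s)$ is holomorphic and nonzero on $\Re s\ge 1$ except for its simple pole at $s=1$; $L(s,\chi_{8,5})$ and $L(s,\chi_{8,3})$ are entire (nonprincipal Dirichlet characters) and, crucially, nonvanishing at $s=1$, so they are holomorphic and nonzero in a neighbourhood of the line $\Re s=1$; $1/\zeta(2s)$ is holomorphic and nonzero on $\Re s\ge 1$ because $\zeta(2s)\neq 0$ there (indeed $\Re(2s)\ge 2$); and $1+2^{-s}$ is entire and nonzero for $\Re s\ge 1$ since $|2^{-s}|\le 1/2<1$. Hence the only singularity of $\beta_{\pm 2}$ on $\Re s\ge 1$ is a simple pole at $s=1$ inherited from $\zeta(s)$.

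Next I would compute the residue. Since $\zeta(s)$ has residue $1$ at $s=1$, the residue of $\beta_2$ at $s=1$ is
\[ c_2=\frac{1}{\zeta(2)}\cdot\frac{L(1,\chi_{8,5})}{1+2^{-1}}=\frac{2L(1,\chi_{8,5})}{3\zeta(2)}, \]
and likewise $c_{-2}=\dfrac{2L(1,\chi_{8,3})}{3\zeta(2)}$. Then Wiener--Ikehara yields $B_{\pm 2}(N)=\sum_{n\le N}b_{\pm 2}(n)\sim c_{\pm 2}N$, which is exactly the claimed asymptotic; the numerical constants $0.25258$ and $0.45016$ follow by plugging in $\zeta(2)=\pi^2/6$ together with the values $L(1,\chi_{8,5})$ and $L(1,\chi_{8,3})$ (these $L$-values have closed forms via the class number formula / the functional equation, or one simply cites LMFDB as the paper does elsewhere), and can be stated without detailed computation.

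The only genuine subtlety — and the step I would flag as the main point to be careful about — is the nonvanishing $L(1,\chi_{8,5})\neq 0$ and $L(1,\chi_{8,3})\neq 0$, i.e.\ the absence of a real zero at $s=1$: this is precisely what makes the pole of $\beta_{\pm 2}$ at $s=1$ simple rather than cancelled, and it is what licenses the application of the tauberian theorem in the form stated. For nonprincipal Dirichlet characters this is the classical fact underlying Dirichlet's theorem on primes in arithmetic progressions (for real characters $L(1,\chi)>0$ follows e.g.\ from Dirichlet's class number formula), so I would invoke it as a standard result, with a pointer to the Appendix. A minor bookkeeping remark I would include for completeness: $B_{\pm 2}(N)$ as defined counts solutions for odd moduli only; the contribution of even moduli with $2\,\|\,n$ is handled separately in the next step of the paper (converting $B_{\pm 2}(N)$ into $D_{2,\pm 2}(N)$), so it does not affect the present proposition. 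Apart from that, every ingredient is already in place, and the proof is essentially ``apply Wiener--Ikehara to the factorization from Lemma \ref{lema:zeteZa2} and compute the residue.''
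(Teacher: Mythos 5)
Your proposal is correct and follows essentially the same route as the paper: factor $\beta_{\pm 2}(s)$ via Lemma \ref{lema:zeteZa2}, observe that the only singularity on $\Re s\ge 1$ is the simple pole of $\zeta(s)$ at $s=1$ (the denominators $\zeta(2s)$ and $1+2^{-s}$ being zero-free there), read off the residue $\frac{2L(1,\chi_{8,\cdot})}{3\zeta(2)}$, and apply the stated Wiener--Ikehara corollary. Your extra remarks on the nonvanishing of $L(1,\chi)$ and on the odd-moduli bookkeeping are sound but do not change the argument.
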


\begin{proof}
The function $\displaystyle B_2(N) = \sum_{n\leq N} b_2(n)$ is the partial sum of the sequence $(b_2(n))_{n\geq 1}$ with Dirichlet series rewritten in Lemma \ref{lema:zeteZa2} as
\[\beta_2(s) = \frac{\zeta(s)}{\zeta(2s)} \cdot \frac{L(s,\chi_{8,5})}{(1+2^{-s})}.  \]
The function $\beta_2(s)$ is analytic on the half-plane given by $\Re s \geqslant 1$ except for $s=1$, and to apply the previous theorem, we need the residue at $s=1$. Among all factors, only  $\zeta(s)$ is not holomorphic at $s=1$. Factors in the denominators have no zeroes for $\Re s > \frac 12$. Since $\zeta(s)$ has a simple pole at $s=1$, we will multiply its residue, which is equal to $1$, with the value of the remaining factors at $s=1$. Therefore the residue of $\beta_2(s)$ at $s=1$ is $\frac{1}{\zeta(2)}\cdot \frac{2L(1,\chi_{8,5})}{3}$ and the claim now follows by the Wiener-Ikehara theorem. Part b) is completely analogous.
\end{proof}
\begin{remark}
One could likely use Perron's formula to find the explicit error term, but this would be computationally harder than our determination of the main term.
\end{remark}

\subsection{The asymptotics of \texorpdfstring{$D_{2,2}(N)$}{D_{2,2}(N)} and \texorpdfstring{$D_{2,-2}(N)$}{D_{2,-2}(N)}}
We can now finally prove that \texorpdfstring{$D_{2,2}(N)$}{D_{2,2}(N)}, the number of $D(2)$-pairs up to $N$, grows linearly with $N$ and determine its gradient.

\begin{proof}[Proof of Theorem \ref{tm:pm2}]
The number of $D(2)$-pairs up to $N$ is equal to the number of congruence solutions $x^2 \equiv 2 \Mod{n}$ with $x\in \lbrace 1, 2, \dots, n\rbrace$ and $n \in \{1, \dotsc, N\}$ plus some $O(1)$. Let us denote the number of congruence solutions by $C_2(N)$.
We now let $n$ vary through all integers between $1$ and $N$, both odd and even. For even $n$, since $2$ and $n/2$ are coprime (due to $2||n$), the number of solutions is $2^{\omega(n/2)}=2^{\omega(n)-1}$. The total count of congruence solutions for $n \leqslant N$ is hence 
 \[   C_2(N) = \displaystyle \sum_{1\leq n \leq N}2^{\omega(n)}\cdot \lambda_\mathcal{G}(n)+\sum_{\substack{1\leq n \leq N \\ 2||n}}2^{\omega(n)-1}\cdot \lambda_\mathcal{G}\left(\frac{n}{2}\right) 
    = B_2(N) + B_2\left(\left\lfloor \frac{N}{2} \right\rfloor\right).
    \]
Since $B_2(N)\sim \frac{2L(1,\chi_{8,5})}{3\zeta(2)}\cdot N$, it follows that
$C_2(N) \sim \left(1+\frac12\right)\frac{2L(1,\chi_{8,5})}{3\zeta(2)}\cdot N = \frac{L(1,\chi_{8,5})}{\zeta(2)}\cdot N$, where the error from replacing $\lfloor N/2 \rfloor$ by $N/2$ is $O(1)$. Part b) is again completely analogous. 

\end{proof}

\section{Estimating the number of \texorpdfstring{$D(q)$}{D(q)}-pairs for odd primes \texorpdfstring{$q$}{q}}
The asymptotic estimation and its proof will have the same outline for odd primes $q$, with the following differences. In Subsection \ref{sub:exi}, we determine whether the congruence \eqref{eq:01} has a solution by using quadratic reciprocity (instead of its supplement for $\pm 2$). In Subsection \ref{sub:cnt}, we carefully analyze the number of solutions with respect to the occurrences of primes $2$ and $q$ in $n$. The usage of the Wiener-Ikehara theorem requires identifying proper characters and computing the residue in the same manner -- this is done in Subsection \ref{sub:BofN}. Expressions for $C(N)$, the total count of solutions of all congruences, are going to vary according to the possible appearances of primes $2$ and $q$ in the prime factorization of $n$. This final analysis is done in Subsection \ref{sub:fin}.

\subsection{Existence of congruence solutions}\label{sub:exi}
We first investigate when equation \eqref{eq:01} has a solution. Since the number of solutions is $0$ or $2^{\omega(b)}$, in the next several lemmas we give conditions on whether the number of solutions is non-zero, depending on the residue of $q$ modulo $8$.

\begin{lemma}\label{Lemma:2} Let $q$ be a prime with $q\equiv 3 \Mod 4.$ Equation \eqref{eq:01} has a solution if and only if $b=\displaystyle\delta\prod_{p_i \neq q} p_i^{\alpha_i}$ such that $\left(\frac{q}{p_i}\right)=1$ for all $i,$ and $\delta \in \lbrace 1, 2, q, 2q \rbrace.$ The condition $\left(\frac{q}{p_i}\right)=1$ is equivalent to $\left(\frac{p_i}{q}\right)= (-1)^{\frac{p_i-1}{2}}$.

\end{lemma}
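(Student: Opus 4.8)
The plan is to follow the same strategy used in Lemma~\ref{lemma:2-2} for $\pm 2$, but with ordinary quadratic reciprocity replacing the supplementary law. First I would reduce solvability of \eqref{eq:01} modulo $b$ to solvability modulo each prime power $p_i^{\alpha_i}$ dividing $b$, via the Chinese Remainder Theorem; since $|q|$ is prime, $q$ is (for the relevant primes) a unit modulo $p_i$, and solvability modulo $p_i^{\alpha_i}$ for odd $p_i \neq q$ is equivalent to solvability modulo $p_i$, i.e.\ to $\left(\frac{q}{p_i}\right)=1$ by Hensel lifting. This accounts for the condition on the prime factors $p_i \neq q$ of $b$.

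Next I would handle the special primes $2$ and $q$ separately, since these are exactly the primes where $q$ fails to be a unit or where $2$-adic obstructions arise. For the prime $2$: because $q \equiv 3 \pmod 4$, we have $q \equiv 3$ or $7 \pmod 8$; in either case $q$ is an odd residue, and a short check of $x^2 \bmod 8$ shows $x^2 \equiv q$ is solvable mod $2$ and mod $4$ but not mod $8$ — this forces the power of $2$ dividing $b$ to be at most $2^1$, contributing the factor $\delta \in \{1,2\}$. Wait — one must be careful: mod $4$, $x^2 \in \{0,1\}$, so $x^2 \equiv 3 \pmod 4$ is \emph{not} solvable, meaning actually only $2^0$ or $2^1$ divide $b$, and mod $2$ every residue is a square, so $2 \mid b$ is fine but $4 \nmid b$; this is consistent with $\delta \in \{1,2\}$. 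For the prime $q$ itself: $x^2 \equiv q \pmod q$ just says $q \mid x^2$, solvable with $x \equiv 0$, and mod $q^2$ one needs $q^2 \mid x^2 - q$, i.e.\ $q^2 \mid x^2 - q$; since $q \mid x$, write $x = qy$, then $q^2 y^2 - q = q(qy^2 - 1)$, and $q \nmid qy^2-1$, so $q^2 \nmid x^2 - q$ — hence $q^2 \nmid b$, contributing $\delta \in \{1, q\}$. Combining the independent $2$-part and $q$-part gives $\delta \in \{1,2,q,2q\}$.

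Finally I would record the reformulation of the condition $\left(\frac{q}{p_i}\right)=1$. By quadratic reciprocity, since $q \equiv 3 \pmod 4$, we have $\left(\frac{q}{p_i}\right)\left(\frac{p_i}{q}\right) = (-1)^{\frac{q-1}{2}\cdot\frac{p_i-1}{2}} = (-1)^{\frac{p_i-1}{2}}$, so $\left(\frac{q}{p_i}\right)=1$ iff $\left(\frac{p_i}{q}\right) = (-1)^{\frac{p_i-1}{2}}$, which is the stated equivalence. I would then note the count of solutions is $2^{\omega(b)}$ when nonzero, citing Lemma~\ref{lema:prva}.

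The main obstacle is getting the $2$-adic and $q$-adic local analysis exactly right — in particular pinning down precisely which powers of $2$ (and of $q$) are permitted in $b$, since this is where the case split on $q \bmod 8$ genuinely enters and where sign/parity bookkeeping is easy to botch; the prime-to-$2q$ part is routine Hensel lifting plus reciprocity and should be stated briefly.
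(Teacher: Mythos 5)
Your proposal is correct and takes essentially the same approach as the paper's proof: reduce to prime powers via the Chinese remainder theorem, use Hensel lifting away from $2$ and $q$, note that $x^2\equiv 3\pmod 4$ is insoluble and that $x^2-q$ is divisible by $q$ but never by $q^2$ to pin down $\delta\in\{1,2,q,2q\}$, and apply quadratic reciprocity with $\tfrac{q-1}{2}$ odd for the final reformulation. The only difference is cosmetic: the paper carries out the lifting to higher prime powers by an explicit pigeonhole argument on the numbers $x_0+ip^n$ rather than citing Hensel's lemma.
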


\begin{proof}
 First we notice that no higher powers of $2$ or $q$ are possible in the factorization of $b$. The number $b$ is not divisible by $4$ since that would imply that $4$ divides $x^2-3$, whereas $b$ is not divisible by $q^2$ since then $q^2$ would divide $x^2-q$. If $\gcd(b,2q)=1$ and $x^2\equiv q \Mod b$ then exactly one of the numbers $x, x+b, x+2b, \dots, x+(2q-1)b$ will be the solution of $y^2 \equiv q \Mod{2qb}.$ This means it is enough to analyze the case $\gcd(b,2q)=1.$

We now focus on such $b.$ Assume that for a fixed $b,$ Equation \eqref{eq:01} has a solution $x_0$ and let $p|b.$ Then $x^2\equiv q \Mod p,$ which by quadratic reciprocity implies that $1=\left(\frac{q}{p}\right)=\left(\frac{p}{q}\right)\cdot (-1)^{\frac{p-1}{2}}.$ We call $p$ \emph{good for $q$} if \textbf{$\left(\frac{p}{q}\right)= (-1)^{\frac{p-1}{2}}$} .

We proved that $b$ must be of the form given in the statement of the lemma. Now we prove that \eqref{eq:01} has a solution for every such $b$.

Assume $p$ is good for $q.$ We prove by induction that $x^2 \equiv q \Mod{p^n}$ has a solution for every $n\in \N.$ The base case is true because from the fact that $p$ is good for $q$ we have that $1=\left(\frac{p}{q}\right)\cdot (-1)^{\frac{p-1}{2}}=\left(\frac{q}{p}\right)$, that is, $q$ is a quadratic residue \text{mod} $p.$ Let $x_0$ be a solution for $p^n.$ If it is also a solution for $p^{n+1},$ we are done. Otherwise, look at the numbers $x_0, x_0+p^n, x_0+2p^n,\dots, x_0+(p-1)p^n,$ more specifically, for $i\neq j,$ look at $\left[(x_0+ip^n)^2-q\right]-\left[(x_0+jp^n)^2-q\right]=(i-j)p^n(2x_0+(i+j)p^n).$ Since $\gcd(p,2q)=1$ we know that $p\nmid 2x_0+(i+j)p^n$ and trivially $p\nmid i-j,$ so the numbers $(x_0+ip^n)^2-q$ give $p$ different residues $\Mod{p^{n+1}}$ and one of these numbers must be divisible by $p^{n+1}.$

If $\gcd(b_1, b_2)=1$ and $x_i^2\equiv q \Mod{b_i},$ then $\{x_1, x_1+b_1, \dots, x_1+(b_2-1)b_1\}$ is the complete residue system mod $b_2$ so one of the elements must be a solution of the equation $x^2\equiv q \Mod{b_2}.$ As each of these numbers is also a solution to the equation $x^2\equiv q \Mod{b_1},$ then there is at least one simultaneous solution (this also follows from the Chinese remainder theorem).
\end{proof}

\begin{lemma}\label{lema:existence51}
Let $q$ be a prime with $q\equiv 5 \Mod 8.$ Equation \eqref{eq:01} has a solution if and only if $b=\delta\prod p_i^{\alpha_i}$ such that $\left(\frac{p_i}{q}\right)=1$ for all $i,$ and $\delta \in \lbrace 1, 2, 4, q, 2q, 4q \rbrace.$

Let $q$ be a prime with $q\equiv 1 \Mod 8.$ Equation \eqref{eq:01} has a solution if and only if $b=\delta\cdot 2^{\alpha_0}\prod p_i^{\alpha_i}$ such that $\left(\frac{p_i}{q}\right)=1$ for all $i,$ and $\delta \in \lbrace 1, q \rbrace.$
\end{lemma}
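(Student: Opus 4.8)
The plan is to mirror the proof of Lemma \ref{Lemma:2}, splitting into the coprime-to-$2q$ part and the local analysis at the primes $2$ and $q$; the only genuinely new ingredient is the behaviour at the prime $2$, which now depends on $q \bmod 8$ rather than being governed by the supplementary law. First I would observe that for odd primes $p \neq q$ dividing $b$, solvability of $x^2 \equiv q \Mod{p^{\alpha}}$ is, by the same Hensel-lifting argument as in Lemma \ref{Lemma:2} (the difference $(x_0+ip^n)^2-(x_0+jp^n)^2 = (i-j)p^n(2x_0+(i+j)p^n)$ is divisible by $p^n$ but not $p^{n+1}$ when $p \nmid i-j$ and $p \nmid 2x_0$), equivalent to $q$ being a quadratic residue mod $p$, i.e.\ $\left(\frac{q}{p}\right)=1$. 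When $q \equiv 5 \Mod 8$ we have $q \equiv 1 \Mod 4$, so quadratic reciprocity gives $\left(\frac{q}{p}\right) = \left(\frac{p}{q}\right)$, which is exactly the stated condition; when $q \equiv 1 \Mod 8$ the same holds. For the prime $q$ itself: $q^2 \nmid b$ since $q^2 \mid x^2 - q$ is impossible, and $q \| b$ is always harmless, since a solution mod $b/q$ lifts to one mod $b$ by CRT (one of $x, x+(b/q), \dots$ is $\equiv 0 \Mod q$). This accounts for the factor $q$ in $\delta$.

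The heart of the matter is the $2$-adic analysis. Here I would compute directly: $x^2 \equiv q \Mod 2$ is always solvable; $x^2 \equiv q \Mod 4$ is solvable iff $q \equiv 1 \Mod 4$, which holds in both cases $q \equiv 1, 5 \Mod 8$; $x^2 \equiv q \Mod 8$ is solvable iff $q \equiv 1 \Mod 8$. Hence when $q \equiv 5 \Mod 8$, the largest power of $2$ that can divide $b$ is $4 = 2^2$, giving $\delta \in \{1,2,4\}$ (times the optional factor $q$), i.e.\ $\delta \in \{1,2,4,q,2q,4q\}$; and by Hensel's lemma for $p=2$ (noting that once $x$ is odd, $x^2 \equiv 1 \Mod 8$, so solvability mod $8$ propagates to all higher powers), when $q \equiv 1 \Mod 8$ there is no restriction on $\alpha_0$, giving the factor $2^{\alpha_0}$ and $\delta \in \{1,q\}$. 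I would verify the propagation step mod $2^n$ carefully, since the prime $2$ behaves differently from odd primes in Hensel lifting (the standard statement requires lifting mod $2^3$ as the base case rather than mod $2$).

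Finally, to assemble the "if" direction I would use the Chinese remainder theorem exactly as in Lemma \ref{Lemma:2}: given $b = \delta \prod p_i^{\alpha_i}$ (resp.\ $b = \delta \cdot 2^{\alpha_0}\prod p_i^{\alpha_i}$) of the stated form, solvability mod each prime power factor has been established, and the prime power factors are pairwise coprime, so a simultaneous solution mod $b$ exists. The "only if" direction is immediate since $b \mid x^2 - q$ forces $p^{\alpha} \mid x^2 - q$ for each prime power $p^{\alpha} \| b$. The main obstacle I anticipate is purely the bookkeeping at the prime $2$ — making sure the base case for $2$-adic Hensel lifting is correctly placed at $2^3$ and that the conclusion $\alpha_0$ unrestricted (case $q\equiv 1$) versus $\alpha_0 \le 2$ (case $q \equiv 5$) is stated with the right inequality — rather than any conceptual difficulty; everything else is a direct transcription of the argument already given for $q \equiv 3 \Mod 4$.
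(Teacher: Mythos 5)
Your proposal is correct and follows essentially the same route as the paper: reduce to the coprime-to-$2q$ part via quadratic reciprocity and odd-prime Hensel lifting as in Lemma~\ref{Lemma:2}, handle the factor $q$ by a CRT-type lift, and treat the prime $2$ separately, with the base case for $2$-adic lifting placed at $2^3$ and the explicit propagation step $x \mapsto x + b\cdot 2^{\alpha-1}$ that the paper carries out. The caveat you flag about verifying the mod-$2^n$ propagation carefully is exactly the computation the paper's proof supplies.
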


\begin{proof}
If $q\equiv 5 \Mod 8$, the proof mimics that of Lemma \ref{Lemma:2}. If $q\equiv 1 \Mod 8$, we only need to prove that any power of $2$ is possible as a factor of $b.$ We again do this by induction. Taking any odd $x,$ we have $x^2 \equiv q \Mod 8.$ For any odd $b$ such that there exists a solution to Equation \eqref{eq:01} we can construct a solution of $x^2\equiv q \Mod{8b}$ since $\gcd(8,b)=1,$ so a base case follows. Let $x$ be such that $x^2 \equiv q \Mod{2^{\alpha}b}$ where $b$ is odd. If it is also true that $x^2 \equiv q \Mod{2^{\alpha+1}b}$ we are done, otherwise look at $x+b\cdot 2^{\alpha-1}.$ Now we have $(x+b\cdot 2^{\alpha-1})^2-q=x^2-q+xb2^{\alpha}+b^22^{2\alpha-2}.$ The right hand side is divisible by $b$, the number $2^{2\alpha-2}$ is divisible by $2^{\alpha+1}$ since $\alpha\geq 3,$ and both numbers $x^2-q$ and $xb2^{\alpha}$ are divisible by exactly $2^{\alpha}$ so their sum is divisible by $2^{\alpha+1}.$
\end{proof}

\begin{lemma}\label{lema:predznak}
Let $p,|q|$ be odd primes such that $|q|\neq p $. Then
\[
\left(\frac{q}{p}\right)=\left(\frac{p}{|q|}\right)\cdot (-1)^{\frac{p-1}{2}\frac{q-1}{2}}
\]
\end{lemma}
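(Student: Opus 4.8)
The plan is to prove Lemma \ref{lema:predznak} by a direct application of the law of quadratic reciprocity, handling the sign of $q$ separately from its prime absolute value. First I would write $q = \varepsilon |q|$ with $\varepsilon \in \{+1, -1\}$, so that by multiplicativity of the Legendre symbol in its top argument, $\left(\frac{q}{p}\right) = \left(\frac{\varepsilon}{p}\right)\left(\frac{|q|}{p}\right)$. The second factor is immediately rewritten via quadratic reciprocity for the two odd primes $p$ and $|q|$: one has $\left(\frac{|q|}{p}\right) = \left(\frac{p}{|q|}\right)(-1)^{\frac{p-1}{2}\frac{|q|-1}{2}}$. It then remains to show that the leftover sign $\left(\frac{\varepsilon}{p}\right)$ combines with the reciprocity sign $(-1)^{\frac{p-1}{2}\frac{|q|-1}{2}}$ to give exactly $(-1)^{\frac{p-1}{2}\frac{q-1}{2}}$.

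The core of the argument is therefore the sign bookkeeping, which I would split into the two cases $\varepsilon = 1$ and $\varepsilon = -1$. If $q > 0$, then $\varepsilon = 1$, so $\left(\frac{\varepsilon}{p}\right) = 1$ and $|q| = q$, and the claimed identity is just quadratic reciprocity verbatim. If $q < 0$, then $q = -|q|$, so $\frac{q-1}{2} = \frac{-|q|-1}{2} = -\frac{|q|+1}{2}$, and one checks that $(-1)^{\frac{p-1}{2}\frac{q-1}{2}} = (-1)^{-\frac{p-1}{2}\frac{|q|+1}{2}} = (-1)^{\frac{p-1}{2}\frac{|q|+1}{2}}$. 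Comparing this with $\left(\frac{-1}{p}\right)(-1)^{\frac{p-1}{2}\frac{|q|-1}{2}} = (-1)^{\frac{p-1}{2}}(-1)^{\frac{p-1}{2}\frac{|q|-1}{2}} = (-1)^{\frac{p-1}{2}\left(1 + \frac{|q|-1}{2}\right)} = (-1)^{\frac{p-1}{2}\frac{|q|+1}{2}}$, we see the two sides agree, using the supplementary law $\left(\frac{-1}{p}\right) = (-1)^{\frac{p-1}{2}}$ and the fact that exponents of $-1$ only matter modulo $2$.

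I expect the only mildly delicate point to be the parity manipulation when $q$ is negative — specifically, keeping track that $\frac{q-1}{2}$ is a negative integer and that $(-1)$ raised to a negative integer equals $(-1)$ raised to its absolute value, together with the identity $\frac{|q|+1}{2} \equiv 1 + \frac{|q|-1}{2} \pmod 2$. None of this is a genuine obstacle; the lemma is essentially a reformulation of quadratic reciprocity that packages the sign of $q$ into the exponent, and the proof is a short case check. The purpose of stating it separately is presumably to streamline the subsequent lemmas (as in Lemma \ref{Lemma:2} and Lemma \ref{lema:existence51}), where "good for $q$" conditions of the form $\left(\frac{p_i}{|q|}\right) = (-1)^{\cdots}$ need to be identified uniformly across positive and negative $q$.
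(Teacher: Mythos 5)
Your proof is correct and follows essentially the same route as the paper: for $q>0$ the statement is quadratic reciprocity verbatim, and for $q<0$ one factors out $\left(\frac{-1}{p}\right)=(-1)^{\frac{p-1}{2}}$, applies reciprocity to $p$ and $|q|$, and checks the parity identity $\frac{p-1}{2}\left(1+\frac{|q|-1}{2}\right)=\frac{p-1}{2}\cdot\frac{|q|+1}{2}\equiv\frac{p-1}{2}\cdot\frac{q-1}{2}\pmod{2}$. The sign bookkeeping you flag as the delicate point is exactly the computation the paper carries out.
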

\begin{proof}
If $q>0$ then this is simply quadratic reciprocity. If $q<0$ we have
\begin{align*}
\left(\frac{q}{p}\right)&=\left(\frac{-1}{p}\right)\left(\frac{|q|}{p}\right)=(-1)^{\frac{p-1}{2}}\left(\frac{p}{|q|}\right)\cdot (-1)^{\frac{p-1}{2}\frac{|q|-1}{2}}=\left(\frac{p}{|q|}\right)\cdot(-1)^{\frac{p-1}{2}\frac{|q|+1}{2}}\\
&=\left(\frac{p}{|q|}\right)\cdot(-1)^{\frac{p-1}{2}\frac{-q+1}{2}}=\left(\frac{p}{|q|}\right)\cdot (-1)^{\frac{p-1}{2}\frac{q-1}{2}}
\end{align*}
\end{proof}

\subsection{The number of congruence equation solutions}\label{sub:cnt}
The following lemma counts the number of solutions when they exist. Some results in it could be written in a shorter form, but this form was chosen to make further proofs easier to understand.

\begin{lemma}{(Extension of Lemma \ref{lema:prva})} \label{Lemma:4}
Let $|q|$ be a prime number and $b\in \N$ such that $\gcd(b,2q)=1,$ and $b$ has only good prime factors for $q.$ 

Let $q\equiv 3 \Mod 4.$ Then the following table gives the number of solutions of the congruence equation in the appropriate interval:

\begin{table}[ht]
\centering
\begin{tabular}{c|c|c}
equation & interval & the number of solutions in the interval \\
\hline
$x^2 \equiv q \Mod{2b}$ & $1\leq x \leq 2b$ & $2^{\omega(2b)-1}$ \\
$x^2 \equiv q \Mod{qb}$ & $1\leq x \leq |q|b$ & $2^{\omega(qb)-1}$ \\
$x^2 \equiv q \Mod{2qb}$ & $1\leq x \leq 2|q|b$ & $2^{\omega(2qb)-2}$
\end{tabular}
\end{table}

\newpage Let $q\equiv 5 \Mod 8.$ Then the following table gives the number of solutions of the congruence equation in the appropriate interval:

\begin{table}[ht]
\centering
\begin{tabular}{c|c|c}
equation & interval & the number of solutions in the interval \\
\hline
$x^2 \equiv q \Mod{2b}$ & $1\leq x \leq 2b$ & $2^{\omega(2b)-1}$ \\
$x^2 \equiv q \Mod{4b}$ & $1\leq x \leq 4b$ & $2^{\omega(4b)}$ \\
$x^2 \equiv q \Mod{qb}$ & $1\leq x \leq |q|b$ & $2^{\omega(qb)-1}$\\
$x^2 \equiv q \Mod{2qb}$ & $1\leq x \leq 2|q|b$ & $2^{\omega(2qb)-2}$\\
$x^2 \equiv q \Mod{4qb}$ & $1\leq x \leq 4|q|b$ & $2^{\omega(4qb)-1}$
\end{tabular}
\end{table}

Let $q\equiv 1 \Mod 8$ and $n\in\mathbb{Z}$ such that $n\geq 0$. Then the following table gives the number of solutions of the congruence equation in the appropriate interval:
\begin{table}[ht]
\centering
\begin{tabular}{c|c|c}
equation & interval & the number of solutions in the interval \\
\hline
$x^2 \equiv q \Mod{2b}$ & $1\leq x \leq 2b$ & $2^{\omega(2b)-1}$ \\
$x^2 \equiv q \Mod{4b}$ & $1\leq x \leq 4b$ & $2^{\omega(4b)}$ \\
$x^2 \equiv q \Mod{2^{n+3}b}$ & $1\leq x \leq 2^{n+3}b$ & $2^{\omega(2^{n+3}b)+1}$ \\
$x^2 \equiv q \Mod{qb}$ & $1\leq x \leq |q|b$ & $2^{\omega(qb)-1}$\\
$x^2 \equiv q \Mod{2qb}$ & $1\leq x \leq 2|q|b$ & $2^{\omega(2qb)-2}$\\
$x^2 \equiv q \Mod{4qb}$ & $1\leq x \leq 4|q|b$ & $2^{\omega(4qb)-1}$\\
$x^2 \equiv q \Mod{2^{n+3}qb}$ & $1\leq x \leq 2^{n+3}|q|b$ & $2^{\omega(2^{n+3}qb)}$ \\
\end{tabular}
\end{table}

\end{lemma}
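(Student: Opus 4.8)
The plan is to reduce every congruence in the three tables to a product of independent congruences by the Chinese remainder theorem, count the solutions of each factor, and reassemble. Every modulus $m$ appearing in the tables has the shape $2^{a}q^{\varepsilon}b$ with $a\geq 0$, $\varepsilon\in\{0,1\}$ and $\gcd(b,2q)=1$; these three factors are pairwise coprime, so the number of residue classes modulo $m$ solving $x^2\equiv q$ is the product of the numbers of solutions modulo $2^a$, modulo $q^{\varepsilon}$, and modulo $b$. Since the interval $[1,m]$ in each row is exactly one complete residue system modulo $m$ (an exact period, so no rounding intervenes), counting integer solutions there coincides with counting these residue classes.

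For the odd part $b$, solvability of $x^2\equiv q\Mod{b}$ is precisely the standing hypothesis that every prime factor of $b$ is good for $q$ (Lemmas \ref{Lemma:2} and \ref{lema:existence51}), and then Lemma \ref{lema:prva} gives exactly $2^{\omega(b)}$ solutions. For the factor $q^{\varepsilon}$ with $\varepsilon=1$: as $|q|$ is prime, $x^2\equiv q\equiv 0\Mod{q}$ forces $q\mid x$, giving a single solution class modulo $q$ (and no higher power of $q$ ever occurs, as the existence lemmas record). For the factor $2^a$ one invokes the classical count of square roots in $(\Z/2^a\Z)$: one solution modulo $2$; two solutions modulo $4$ exactly when $q\equiv 1\Mod{4}$; and four solutions modulo $2^a$ for every $a\geq 3$ exactly when $q\equiv 1\Mod{8}$, since the squares have index $4$ in $(\Z/2^a\Z)^{\times}$. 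In each row the power of $2$ present is compatible with the residue of $q$ modulo $8$ labelling that block, so this factor contributes $1$, $2$, or $4$, and in particular each product is nonzero and of the displayed size.

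Reassembling, write $\omega$ for the number of distinct prime divisors and note $\omega(2^{a}q^{\varepsilon}b)=\omega(b)+[a\geq 1]+\varepsilon$ because $b$ is coprime to $2q$. Then, for example, modulo $2b$ the count is $1\cdot 2^{\omega(b)}=2^{\omega(2b)-1}$; modulo $4b$ (available only when $q\equiv 1\Mod{4}$) it is $2\cdot 2^{\omega(b)}=2^{\omega(4b)}$; modulo $2^{n+3}b$ (available only when $q\equiv 1\Mod{8}$) it is $4\cdot 2^{\omega(b)}=2^{\omega(2^{n+3}b)+1}$; modulo $qb$ it is $2^{\omega(qb)-1}$; modulo $2qb$ it is $2^{\omega(2qb)-2}$; modulo $4qb$ it is $2\cdot 2^{\omega(b)}=2^{\omega(4qb)-1}$; and modulo $2^{n+3}qb$ it is $4\cdot 2^{\omega(b)}=2^{\omega(2^{n+3}qb)}$. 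These are exactly the table entries, and the absence of the moduli $4b$ and $4qb$ for $q\equiv 3\Mod{4}$ matches the absence of the corresponding $\delta$ in Lemma \ref{Lemma:2}.

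The argument has essentially no obstacle: the only non-bookkeeping ingredient is the $2$-adic square-root count, which is standard, and the one place care is needed is to confirm that each listed modulus is one for which the relevant existence lemma guarantees solvability — so that ``the number of solutions'' is the displayed nonzero value rather than $0$ — together with the trivial multiplicativity of $\omega$ across the coprime factors. Presenting the computation in the grouped table format of the statement, rather than in a single formula, is purely for convenience in the later case analyses of this paper.
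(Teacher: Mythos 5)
Your proof is correct and follows essentially the same route as the paper: the paper's proof likewise counts via the Chinese remainder decomposition (doing the $qb$ case explicitly by locating the unique multiple of $q$ in an arithmetic progression of solutions mod $b$, and delegating the coprime and $2$-power cases to Vinogradov). You merely make explicit the standard $2$-adic square-root counts ($1$, $2$, or $4$ solutions modulo $2$, $4$, or $2^{a}$ with $a\geq 3$) that the paper cites by reference.
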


\begin{proof}
We prove the statements $\Mod{qb}$. The number of solutions $\Mod b$ such that $1\leq x \leq b$ is $2^{\omega(b)}.$ Take any such solution $x_0$ and look at the numbers $x_0, x_0+b, x_0+2b,\dots, x_0+(|q|-1)b.$ Exactly one of them will be divisible by $q$ so there will be exactly $2^{\omega(b)}=2^{\omega(qb)-1}$ solutions $\Mod{qb}$ in the interval $[1,|q|b].$

The other cases are analogous, and the cases where the modulus and $q$ are coprime also follow from \cite[V.4.g]{Vinogradov}.
\end{proof}


\subsection{Sets of good primes \texorpdfstring{$\G_q$}{G_q} for various \texorpdfstring{$q$}{q} with accompanying arithmetic functions}\label{sub:BofN}

As before with Lemma \ref{lemma:2-2}, Lemmas \ref{Lemma:2}, \ref{lema:existence51} and \ref{lema:predznak} motivate definitions of sets $\G_q$ for various residues of $q \Mod 8.$ For $q \equiv 3 \Mod 4$ such that $|q|$ is prime, denote by $\mathcal{G}_q$ the set of good primes for $q$ \[\displaystyle \mathcal{G}=\mathcal{G}_q=\lbrace p\in \mathbb{P} \colon \left(\frac{p}{|q|}\right) = (-1)^{\frac{p-1}{2}}\rbrace,
\] 
and in addition for $q=-1$ let

\[\displaystyle \mathcal{G}=\mathcal{G}_{-1}=\lbrace p\in \mathbb{P} \colon p\equiv 1\Mod 4 \rbrace .\]

Let
\[
\lambda_\mathcal{G}(n)=
	\begin{cases}
		1, \quad\text{ if } n=p_1^{\alpha_1}\dots p_k^{\alpha_k}, \quad p_i\in \mathcal{G}\\
		0, \quad\text{ otherwise}
	\end{cases},
\]

\noindent along with\[ b_q(n)=2^{\omega(n)}\cdot \lambda_{\G_q}(n). \]

We want to estimate the weighted sum
\[
\displaystyle B_q(N)= \sum_{1\leq n \leq N} 2^{\omega(n)}\cdot \lambda_{\G}(n)=\sum_{1\leq n \leq N} b_q(n).
\] $B_q(N)$ counts the total number of solutions $x \in \{1, \dotsc, n\}$ of all congruences $x^2 \equiv q \Mod{n},$ where $\gcd(n,2q)=1$ and $1 \leq n\leq N.$ We can easily account for possible factors of $2$ and $q$ in $n$ later; understanding the asymptotic behavior of $B_q(N)$ will be enough to understand $D_{2,q}(N).$

	As before, we define the following two Dirichlet series (which both depend on $q$):
\[
\zeta_{\G}(s):=\DD \lambda_{\G}(s)=\sum \frac{\lambda_{\mathcal{G}}(n)}{n^s}, \quad \beta_q(s):=\DD b_q(s)=\sum \frac{b_q(n)}{n^s},
\]
for which Lemma \ref{lema:omjerzeta} holds. We rewrite $\beta_q(s)$ in terms of the zeta function and the $L$-function of a Dirichlet character mod $4q$, as these functions are holomorphic in the region $\Re s \geqslant 1$, except for $s=1$, and their values and residues are computable.
\begin{lemma}\label{betaL} With notation as above,
\[
\beta_q(s)=\frac{\zzg^2(s)}{\zzg(2s)} = \frac{\zeta(s)}{\zeta(2s)}\cdot  \frac{L(s,\chi_{4|q|,4|q|-1})}{(1+2^{-s})(1+|q|^{-s})}, 
\]
\[
\beta_{-1}(s)=\frac{\zeta(s)}{\zeta(2s)}\cdot  \frac{L(s,\chi_{4,3})}{(1+2^{-s})}. 
\]
\end{lemma}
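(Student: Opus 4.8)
The plan is to establish the identity $\beta_q(s)=\zzg^2(s)/\zzg(2s)$ first — but that is already handed to us by Lemma \ref{lema:omjerzeta}, applied to the set $\G=\G_q$ and its generated monoid's indicator $\lambda_{\G_q}$. So the real content is the second equality: rewriting $\zzg^2(s)/\zzg(2s)$ in terms of $\zeta(s)/\zeta(2s)$ and a single Dirichlet $L$-function. I would follow exactly the template of the proof of Lemma \ref{lema:zeteZa2}, only now the relevant modulus is $4|q|$ rather than $8$, and the set of residue classes is governed by Lemma \ref{Lemma:2}: a prime $p\neq 2,|q|$ lies in $\G_q$ iff $\left(\frac{p}{|q|}\right)=(-1)^{(p-1)/2}$, which by Lemma \ref{lema:predznak} is iff $\left(\frac{q}{p}\right)=1$, and this condition is exactly the statement that the Kronecker symbol $\chi_{4|q|}(p)$ (the quadratic character mod $4|q|$ attached to the discriminant $\pm 4|q|$, which in LMFDB labelling is $\chi_{4|q|,4|q|-1}$) equals $+1$.

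First I would complement the Euler product for $\zzg(s)=\prod_{p\in\G_q}(1-p^{-s})^{-1}$ up to the full $\zeta(s)$, pulling out the factors for $p=2$, $p=|q|$, and all $p$ with $\chi_{4|q|}(p)=-1$; explicitly $\zzg(s)=\zeta(s)(1-2^{-s})(1-|q|^{-s})\prod_{\chi_{4|q|}(p)=-1}(1-p^{-s})$. Then I would form $\zzg^2(s)/\zzg(2s)$, use $(1-p^{-s})^2/(1-p^{-2s})=(1-p^{-s})/(1+p^{-s})$ on every prime, so that the $2$-factor becomes $(1-2^{-s})/(1+2^{-s})$, the $|q|$-factor becomes $(1-|q|^{-s})/(1+|q|^{-s})$, and the product over $\chi_{4|q|}(p)=-1$ becomes $\prod(1-p^{-s})/(1+p^{-s})$. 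Finally I would recognise
\[
\frac{\prod_{\chi_{4|q|}(p)=+1}(1-p^{-s})^{-1}\,\prod_{\chi_{4|q|}(p)=-1}(1+p^{-s})^{-1}}{\prod_{p\nmid 4|q|}(1-p^{-s})^{-1}}=\frac{L(s,\chi_{4|q|,4|q|-1})}{\zeta(s)\,\text{(local factors at }2,|q|)},
\]
i.e.\ multiply and divide by the missing good-prime factors so that the numerator of the last fraction is the Euler product of $L(s,\chi_{4|q|,4|q|-1})$ and the denominator is (the $2,|q|$-deprived) $\zeta(s)$; the bookkeeping of the $(1-2^{-s})$ and $(1-|q|^{-s})$ factors collapses the $\zeta^2(s)$ down to $\zeta(s)$ and leaves precisely $\frac{\zeta(s)}{\zeta(2s)}\cdot\frac{L(s,\chi_{4|q|,4|q|-1})}{(1+2^{-s})(1+|q|^{-s})}$. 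The $q=-1$ case is the same computation with $|q|=1$, so there is no $|q|$-factor, the conductor is $4$, and the character is $\chi_{4,3}$ (the nontrivial character mod $4$); this reproduces the classical identity behind counting $D(-1)$-pairs.

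The main obstacle — really the only subtle point — is the identification of the Dirichlet character: one must check that the primitive quadratic character modulo $4|q|$ with the stated LMFDB label $\chi_{4|q|,4|q|-1}$ is exactly the one whose value at an odd prime $p\neq|q|$ is $\left(\frac{q}{p}\right)$, uniformly in the sign of $q$ and in $q\bmod 8$. This is where Lemma \ref{lema:predznak} does the work: it converts $\left(\frac{q}{p}\right)$ into $\left(\frac{p}{|q|}\right)(-1)^{\frac{p-1}{2}\frac{q-1}{2}}$, which is a fixed (Jacobi-symbol-type) function of $p\bmod 4|q|$, hence a Dirichlet character mod $4|q|$; one then verifies it is primitive of the claimed conductor (conductor $4|q|$ when $q\equiv 3\bmod 4$, the relevant case here — when $q\equiv 1\bmod 4$ the conductor drops to $|q|$ and one is in parts b), c) of Theorem \ref{tm:sviq} instead) and matches the sign convention in LMFDB. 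Everything else is the routine Euler-product manipulation already rehearsed in Lemma \ref{lema:zeteZa2}, and the holomorphy of $\beta_q(s)$ on $\Re s\geq 1$ away from $s=1$ then follows as before from the non-vanishing of $L(s,\chi_{4|q|,4|q|-1})$ and $\zeta(2s)$ on that region.
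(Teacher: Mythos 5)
Your proposal is correct and follows essentially the same route as the paper: complement the Euler product of $\zzg(s)$ to $\zeta(s)$ by extracting the factors at $2$, $|q|$, and the bad primes, apply $(1-p^{-s})^2/(1-p^{-2s})=(1-p^{-s})/(1+p^{-s})$, and reassemble the good/bad-prime products into the Euler product of $L(s,\chi_{4|q|,4|q|-1})$, with the $q=-1$ case obtained by dropping the $|q|$-factor. Your extra care in identifying the character via Lemma \ref{lema:predznak} and the Kronecker symbol is a point the paper leaves implicit (deferring to the definition of $\chi_{4|q|,4|q|-1}$ in the Appendix), but the argument is the same.
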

\begin{proof}
As in the proof of Lemma \ref{lema:zeteZa2} we first rewrite $\zzg(s)$:
\begin{align*}
    \zzg(s) &= \prod_{p\in\G} (1-p^{-s})^{-1} = \zeta(s) \prod_{p\not\in\G} (1-p^{-s}) \\
            &= \zeta(s)\cdot(1-2^{-s})(1-|q|^{-s})\cdot\prod_{\substack{p\notin \G \\ p \neq 2,|q|}}(1-p^{-s}).
\end{align*}
Plugging this in the expression for $\beta_q(s)$ we have
\begin{align*}
    \frac{\zzg^2(s)}{\zzg(2s)} &= \frac{\zeta^2(s)}{\zeta(2s)} \cdot \frac{(1-2^{-s})^2(1-|q|^{-s})^2}{(1-2^{-2s})(1-|q|^{-2s})} \cdot \prod_{\substack{p\notin \G \\ p \neq 2,|q|}}\frac{(1-p^{-s})^2}{(1-p^{-2s})}\\
    &= \frac{\zeta^2(s)}{\zeta(2s)} \cdot \frac{(1-2^{-s})(1-|q|^{-s})}{(1+2^{-s})(1+|q|^{-s})} \cdot \prod_{\substack{p\notin \G \\ p \neq 2,|q|}}\frac{(1-p^{-s})}{(1+p^{-s})}\cdot \prod_{p\in \G}\frac{(1-p^{-s})}{(1-p^{-s})}\\
    &=\frac{\zeta(s)}{\zeta(2s)}\cdot \frac{1}{(1+2^{-s})(1+|q|^{-s})}\cdot \prod_{\substack{p\notin \G \\ p \neq 2,|q|}} (1+p^{-s})^{-1} \prod_{p\in \G} (1-p^{-s})^{-1}\\
    &=\frac{\zeta(s)}{\zeta(2s)}\cdot \frac{L(s,\chi_{4|q|,4|q|-1})}{(1+2^{-s})(1+|q|^{-s}).}
\end{align*}
The statement for $\beta_{-1}$ follows the same proof, except there is no  $(1-|q|^{-s})$ factor in $\zzg$ (and consequently, no $(1+|q|^{-s})^{-1}$ in $\beta_{-1}$).
\end{proof}

\begin{proposition}
If $q\equiv 3 \Mod 4$ such that $|q|$ is prime, then $B_q(N) \sim \frac{2|q|}{3(|q|+1)}\frac{L(1,\chi_{4|q|,4|q|-1})}{\zeta(2)} N ,$ while $B_{-1}(N) \sim \frac{2}{3}\frac{L(1,\chi_{4,3})}{\zeta(2)}N.$ 
\end{proposition}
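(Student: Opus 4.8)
The plan is to apply the Wiener--Ikehara corollary to the sequence $(b_q(n))_{n\geq 1}$, exactly as was done in Proposition \ref{prop:07} for $q=\pm 2$. By Lemma \ref{betaL}, the Dirichlet series $\beta_q(s)=\DD b_q(s)$ can be written as
\[
\beta_q(s)=\frac{\zeta(s)}{\zeta(2s)}\cdot\frac{L(s,\chi_{4|q|,4|q|-1})}{(1+2^{-s})(1+|q|^{-s})},
\]
and the remarks following Lemma \ref{lema:zeteZa2} (together with Corollary \ref{kor:holo} in the Appendix) guarantee that $\beta_q(s)$ extends to a function holomorphic on $\Re s \geq 1$ except for a simple pole at $s=1$ inherited from $\zeta(s)$. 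The coefficients $b_q(n)=2^{\omega(n)}\lambda_{\G_q}(n)$ are manifestly nonnegative, so the hypotheses of the theorem are met.

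First I would compute the residue of $\beta_q(s)$ at $s=1$. Since $\zeta(s)$ has residue $1$ there, and each of $\zeta(2s)$, $1+2^{-s}$, $1+|q|^{-s}$, and $L(s,\chi_{4|q|,4|q|-1})$ is holomorphic and nonvanishing at $s=1$ (the $L$-function of a nontrivial Dirichlet character does not vanish at $s=1$, and the elementary factors are clearly positive there), the residue is simply the product of the remaining factors evaluated at $s=1$:
\[
\operatorname{Res}_{s=1}\beta_q(s)=\frac{1}{\zeta(2)}\cdot\frac{L(1,\chi_{4|q|,4|q|-1})}{(1+\tfrac12)(1+|q|^{-1})}=\frac{1}{\zeta(2)}\cdot\frac{L(1,\chi_{4|q|,4|q|-1})}{\tfrac32\cdot\tfrac{|q|+1}{|q|}}=\frac{2|q|}{3(|q|+1)}\cdot\frac{L(1,\chi_{4|q|,4|q|-1})}{\zeta(2)}.
\]
The Wiener--Ikehara corollary then yields $B_q(N)=\sum_{n\leq N}b_q(n)\sim \frac{2|q|}{3(|q|+1)}\frac{L(1,\chi_{4|q|,4|q|-1})}{\zeta(2)}N$, which is the claimed asymptotic. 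For the case $q=-1$, the same argument applies verbatim using the second formula in Lemma \ref{betaL}, where the factor $(1+|q|^{-s})$ is absent; the residue is then $\frac{1}{\zeta(2)}\cdot\frac{L(1,\chi_{4,3})}{3/2}=\frac{2}{3}\frac{L(1,\chi_{4,3})}{\zeta(2)}$, giving $B_{-1}(N)\sim\frac{2}{3}\frac{L(1,\chi_{4,3})}{\zeta(2)}N$.

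There is essentially no obstacle here, since all the analytic heavy lifting — the Euler-product manipulation expressing $\beta_q$ in terms of $\zeta$ and an $L$-function, and the holomorphic continuation to $\Re s\geq 1$ away from $s=1$ — has already been carried out in Lemma \ref{betaL} and the surrounding discussion. The only point that deserves an explicit word is the nonvanishing of $L(1,\chi_{4|q|,4|q|-1})$, which is the standard fact that $L(1,\chi)\neq 0$ for a nonprincipal Dirichlet character $\chi$ (equivalently, one may note $L(1,\chi)>0$ here since the character is real and the relevant value is positive); this ensures the pole of $\beta_q$ at $s=1$ is genuinely simple with the stated residue rather than being cancelled. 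Beyond that, the proof is a direct transcription of Proposition \ref{prop:07}, with the numerical factor $\frac{2}{3}$ replaced by $\frac{2|q|}{3(|q|+1)}$ to account for the extra $(1+|q|^{-s})$ in the denominator.
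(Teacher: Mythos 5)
Your proposal is correct and follows exactly the route the paper intends: the paper's proof of this proposition is literally ``Analogous to the proof of Proposition \ref{prop:07},'' i.e.\ apply the Wiener--Ikehara corollary to the expression for $\beta_q(s)$ from Lemma \ref{betaL} and read off the residue at $s=1$. Your residue computation $\frac{1}{\zeta(2)}\cdot\frac{L(1,\chi_{4|q|,4|q|-1})}{(3/2)\cdot((|q|+1)/|q|)}=\frac{2|q|}{3(|q|+1)}\cdot\frac{L(1,\chi_{4|q|,4|q|-1})}{\zeta(2)}$ and the $q=-1$ variant are both accurate.
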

\begin{proof} Analogous to the proof of Proposition \ref{prop:07}.
\end{proof}

For $q\equiv 1,5 \Mod 8$ we set 
\[
\G=\G_q=\left\lbrace p \in \P \colon p \neq 2, \left(\frac{p}{|q|}\right)=1\right\rbrace
\]
and define $\lambda_{\G}, b_q(n), B_q(N), \zeta_{\G}(s), \beta_q(s)$ as in the case $q\equiv 3\pmod{4}$ (accordingly with respect to the appropriate set $\G$).

\begin{lemma} For $q\equiv 5 \Mod 8$ such that $|q|$ is prime we have
\[
\beta_q(s)=\frac{\zzg^2(s)}{\zzg(2s)} = \frac{\zeta(s)}{\zeta(2s)}\cdot  \frac{L(s,\chi_{|q|,|q|-1})}{(1+|q|^{-s})}, 
\]
while for $q\equiv 1 \Mod 8$ such that $|q|$ is prime we have
\[
\beta_q(s)=\frac{\zzg^2(s)}{\zzg(2s)} = \frac{\zeta(s)}{\zeta(2s)}\cdot  \frac{(1-2^{-s})L(s,\chi_{|q|,|q|-1})}{(1+2^{-s})(1+|q|^{-s})}. 
\]
\end{lemma}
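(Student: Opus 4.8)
The plan is to mimic the proof of Lemma~\ref{betaL} almost verbatim, adapting it to the two new sets of good primes. In both cases $q \equiv 1, 5 \Mod 8$, the set $\G = \G_q$ consists of all odd primes $p$ with $\left(\frac{p}{|q|}\right) = 1$, so the primes \emph{excluded} from $\G$ are exactly $2$, $|q|$ itself, and the odd primes $p \neq |q|$ with $\left(\frac{p}{|q|}\right) = -1$. First I would write $\zzg(s) = \prod_{p \in \G}(1-p^{-s})^{-1} = \zeta(s) \prod_{p \notin \G}(1-p^{-s})$ and split off the special factors at $2$ and $|q|$, leaving a product over the "non-residue" primes. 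Then, exactly as before, forming $\zzg^2(s)/\zzg(2s)$ collapses each Euler factor $(1-p^{-s})^2/(1-p^{-2s})$ to $(1-p^{-s})/(1+p^{-s})$, so I obtain
\[
\frac{\zzg^2(s)}{\zzg(2s)} = \frac{\zeta(s)}{\zeta(2s)} \cdot \frac{(1-2^{-s})(1-|q|^{-s})}{(1+2^{-s})(1+|q|^{-s})} \cdot \prod_{\substack{p \nmid 2q \\ (p/|q|) = -1}} \frac{1-p^{-s}}{1+p^{-s}},
\]
after multiplying and dividing by $\prod_{p \in \G}(1-p^{-s})/(1-p^{-s})$ to set up the completion.

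The key step is recognizing the remaining infinite product as a ratio of $L$-values. Writing $\prod_{(p/|q|)=-1}\frac{1-p^{-s}}{1+p^{-s}} = \prod_{(p/|q|)=-1}(1-p^{-s})^{-1}\big/\prod_{(p/|q|)=-1}(1+p^{-s})^{-1}$ and, as in Lemma~\ref{lema:zeteZa2}, completing both products over \emph{all} odd primes $p \nmid q$, I use that $\chi_{|q|,|q|-1}$ is the Kronecker symbol $\left(\frac{\cdot}{|q|}\right)$ (a real primitive character mod $|q|$, since here $|q| \equiv 1 \Mod 4$), whose Euler factor at $p$ is $(1 - \chi(p)p^{-s})^{-1}$: this equals $(1-p^{-s})^{-1}$ on quadratic residues and $(1+p^{-s})^{-1}$ on non-residues. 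Hence the completed numerator product is $L(s,\chi_{|q|,|q|-1})$ divided by the residue-prime factors, and the completed denominator is $L(s,\chi_{0})$-type (the principal character), i.e.\ $\zeta(s)$ with its Euler factor at $|q|$ removed, so the residue-prime factors cancel and one is left with $L(s,\chi_{|q|,|q|-1})$ times an explicit $|q|$-factor. After bookkeeping the $(1\pm|q|^{-s})$ corrections, the $|q|$-factor is absorbed into the stated $(1+|q|^{-s})^{-1}$, and the only difference between the two cases is the treatment of $2$: for $q \equiv 5 \Mod 8$ the prime $2 \notin \G$ but $2$ plays no role in $\chi_{|q|,\cdot}$, and the $(1-2^{-s})/(1+2^{-s})$ combines with a compensating factor to leave nothing extra, whereas for $q \equiv 1 \Mod 8$ an uncancelled $(1-2^{-s})/(1+2^{-s})$ survives, yielding the displayed numerator $(1-2^{-s})L(s,\chi_{|q|,|q|-1})$ over $(1+2^{-s})(1+|q|^{-s})$.

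The main obstacle I anticipate is purely organizational rather than conceptual: keeping track of exactly which finite Euler factors (at $2$ and at $|q|$) are being inserted or removed at each completion step, so that the final $2$- and $|q|$-dependent rational factors come out precisely as $(1+2^{-s})$, $(1+|q|^{-s})$, and $(1-2^{-s})$ in the two cases — a sign or an exponent slip here changes the constant in the eventual asymptotic. I would guard against this by checking the Euler factor at $p = 2$ and at $p = |q|$ separately at the end, comparing against the known answer for $|q| = 2$ from Lemma~\ref{lema:zeteZa2}. One subtlety worth a remark: this uses $|q| \equiv 1 \Mod 4$ (true when $q \equiv 1 \Mod 8$, and also when $q \equiv 5 \Mod 8$ with $q > 0$, and for $q < 0$ via Lemma~\ref{lema:predznak} the good-prime condition $\left(\frac{q}{p}\right)=1$ still reduces to $\left(\frac{p}{|q|}\right)=1$ since $\frac{p-1}{2}\frac{q-1}{2}$ is even in these residue classes), which is exactly what makes $\chi_{|q|,|q|-1}$ the relevant \emph{even} primitive Kronecker character and forces the conductor to be $|q|$ rather than $4|q|$, in contrast to case~a).
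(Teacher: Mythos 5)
Your overall plan is exactly the paper's: the paper's entire proof of this lemma is the sentence ``The proof is similar to the proof of Lemma \ref{betaL}'', and your computation --- split off the Euler factors at $2$ and $|q|$, collapse $(1-p^{-s})^2/(1-p^{-2s})$ to $(1-p^{-s})/(1+p^{-s})$, then complete the residue/non-residue products to $L(s,\chi_{|q|,|q|-1})$ --- is that computation. The endpoint formulas you state are the correct ones. However, two details need repair. The smaller one: your displayed intermediate identity is off by a factor of $\zeta(s)$. Before the completion absorbs one copy of $\zeta(s)$ into the $L$-function, the prefactor must be $\zeta^2(s)/\zeta(2s)$, as in the second line of the proof of Lemma \ref{betaL}, not $\zeta(s)/\zeta(2s)$; as written, your right-hand side equals the left-hand side divided by $\zeta(s)$.

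The more important issue is the parenthetical ``$2$ plays no role in $\chi_{|q|,\cdot}$'', which is the opposite of the mechanism that produces the case split. Because the conductor $|q|$ is odd, $\chi_{|q|,|q|-1}(2)=\left(\frac{q}{2}\right)$ is nonzero, and it equals $-1$ when $q\equiv 5 \Mod 8$ and $+1$ when $q\equiv 1 \Mod 8$. Completing the product over odd primes $p\neq |q|$ to the full $L$-function inserts the Euler factor $(1-\chi(2)2^{-s})^{-1}$ at $2$, so one must multiply by $(1-\chi(2)2^{-s})$ to compensate; after the $(1-2^{-s})$ coming from $2\notin\G$ cancels in the completion, the surviving $2$-adic contribution is exactly $(1-\chi(2)2^{-s})/(1+2^{-s})$, which is $1$ for $q\equiv 5\Mod 8$ and $(1-2^{-s})/(1+2^{-s})$ for $q\equiv 1\Mod 8$. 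In other words, your unnamed ``compensating factor'' \emph{is} the value $\chi(2)$; if $2$ truly played no role in $\chi$, both congruence classes would yield the same formula and the lemma's dichotomy would disappear. (A minor further aside: $\chi_{|q|,|q|-1}$ is not an even character when $q<0$, since $\left(\frac{q}{-1}\right)=-1$ there, but nothing in the argument uses parity.) Your proposed safeguard --- checking the Euler factors at $2$ and $|q|$ separately at the end --- would indeed catch both slips.
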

\begin{proof}
The proof is similar to the proof of Lemma \ref{betaL}.
\end{proof}
\begin{proposition} \label{prop:15mod8}
If $q\equiv 5 \Mod 8$ then \[
B_q(N) \sim \frac{|q|}{|q|+1}\frac{L(1,\chi_{|q|,|q|-1})}{\zeta(2)} N,
\]
and if $q\equiv 1 \Mod 8$ then 
\[
B_q(N) \sim \frac{|q|}{3(|q|+1)}\frac{L(1,\chi_{|q|,|q|-1})}{\zeta(2)} N .
\]
\end{proposition}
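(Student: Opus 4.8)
The plan is to mirror the structure already used for Proposition \ref{prop:07} and the proposition immediately before this one: start from the Dirichlet series factorization just established in the preceding lemma, locate its rightmost singularity, compute the residue there, and then invoke the Wiener--Ikehara corollary to pass from the analytic behaviour of $\beta_q(s)$ to the asymptotics of its partial sums $B_q(N)$. All the nontrivial analytic input — holomorphy of $\beta_q(s)$ on $\Re s \geq 1$ except for a simple pole at $s=1$, nonvanishing of $\zeta(2s)$ and of the Dirichlet $L$-function for $\Re s > \tfrac12$, and the fact that $L(s,\chi_{|q|,|q|-1})$ extends holomorphically past $s=1$ — is either stated in the excerpt (Corollary \ref{kor:holo} in the Appendix) or standard, so I would cite it rather than reprove it.

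Concretely, first I would recall that by the previous lemma
\[
\beta_q(s) = \frac{\zeta(s)}{\zeta(2s)}\cdot\frac{L(s,\chi_{|q|,|q|-1})}{(1+|q|^{-s})}
\qquad (q\equiv 5 \Mod 8),
\]
\[
\beta_q(s) = \frac{\zeta(s)}{\zeta(2s)}\cdot\frac{(1-2^{-s})L(s,\chi_{|q|,|q|-1})}{(1+2^{-s})(1+|q|^{-s})}
\qquad (q\equiv 1 \Mod 8),
\]
and note that among all factors only $\zeta(s)$ fails to be holomorphic at $s=1$, with a simple pole of residue $1$ there, while the denominators $\zeta(2s)$, $1+|q|^{-s}$ and $1+2^{-s}$ are nonzero in a neighbourhood of $\Re s = 1$. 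Hence $\beta_q(s)$ has a simple pole at $s=1$ whose residue is obtained by multiplying $\operatorname{Res}_{s=1}\zeta(s)=1$ by the value of the remaining factors at $s=1$. For $q\equiv 5\Mod 8$ this gives $\tfrac{1}{\zeta(2)}\cdot\tfrac{L(1,\chi_{|q|,|q|-1})}{1+|q|^{-1}} = \tfrac{|q|}{|q|+1}\cdot\tfrac{L(1,\chi_{|q|,|q|-1})}{\zeta(2)}$; for $q\equiv 1\Mod 8$ the extra factor $\tfrac{1-2^{-1}}{1+2^{-1}} = \tfrac13$ appears, yielding residue $\tfrac{|q|}{3(|q|+1)}\cdot\tfrac{L(1,\chi_{|q|,|q|-1})}{\zeta(2)}$.

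Finally, since $b_q(n) = 2^{\omega(n)}\lambda_{\G_q}(n) \geq 0$ and the Dirichlet series $\beta_q(s) = \sum b_q(n) n^{-s}$ converges on $\Re s > 1$ to a function analytic on $\Re s \geq 1$ except for the simple pole at $s=1$ with the residue computed above, the Wiener--Ikehara corollary applies verbatim and gives $B_q(N) = \sum_{n\leq N} b_q(n) \sim cN$ with $c$ equal to that residue, which is exactly the claimed asymptotic in each case. I do not anticipate a genuine obstacle: the only point requiring care — and it is already handled in the analogous Proposition \ref{prop:07} — is justifying that $\beta_q(s)$ really does extend analytically (not merely meromorphically) to the closed half-plane $\Re s \geq 1$ apart from $s=1$, which follows because $\zeta(s)/\zeta(2s)$ times a Dirichlet $L$-function has its only pole on that line at $s=1$ (using nonvanishing of $\zeta$ on $\Re s = 1$ and of $L(s,\chi)$ at $s=1$), so that one can write the proof simply as ``analogous to the proof of Proposition \ref{prop:07}'' after recording the two residue computations.
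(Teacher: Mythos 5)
Your proposal is correct and follows exactly the route the paper intends: the paper states this proposition without proof precisely because it is the same Wiener--Ikehara residue computation as in Proposition \ref{prop:07}, applied to the factorizations from the preceding lemma, and your residue evaluations $\frac{1}{1+|q|^{-1}}=\frac{|q|}{|q|+1}$ and $\frac{1-2^{-1}}{1+2^{-1}}=\frac13$ match the stated constants. Nothing is missing.
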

\subsection{The asymptotics of \texorpdfstring{$D_{2,q}(N)$}{D_{2,q}(N)} for prime \texorpdfstring{$|q|$}{|q|}} \label{sub:fin}

We complete the task of calculating the asymptotics of $D_{2,q}(N)$ where $|q|$ is prime. In one step of the proof we will interchange the limit and the series. To show that we can do this, we appeal to the dominated convergence theorem, in the form of Tannery's theorem \cite{Loya}, which we now state.

\begin{theorem}[Tannery]
    For each positive integer $k$, let $\sum_{m=1}^{n_k} a_m(k)$ be a finite sum such that $n_k \to \infty$ as $k\to \infty$. If for each $m$, $\lim_{k\to\infty} a_m(k)$ exists, and there is a convergent series $\sum_{m=1}^\infty M_m$ of nonnegative real numbers such that $|a_m(k)| \leqslant M_m$ for all $k\in\N$ and $1 \leqslant m \leqslant n_k$, then
    \[ \lim_{k\to\infty} \sum_{m=1}^{n_k} a_m(k) = \sum_{m=1}^{\infty} \lim_{k\to \infty} a_m(k);\]
    that is, both sides are well defined (the limits and sums converge) and are equal.
\end{theorem}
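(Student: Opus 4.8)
The statement is Tannery's theorem, which is a discrete incarnation of the dominated convergence theorem; the plan is to prove it by the standard head/tail splitting argument. First I would put all the partial sums on a common index set: set $a_m(k):=0$ for $m>n_k$, so that $\sum_{m=1}^{n_k}a_m(k)=\sum_{m=1}^{\infty}a_m(k)$. Since $n_k\to\infty$, for every fixed $m$ we have $m\le n_k$ for all sufficiently large $k$, so the extended sequence $k\mapsto a_m(k)$ eventually agrees with the original one and therefore still converges, to the same limit $\ell_m:=\lim_{k\to\infty}a_m(k)$. Letting $k\to\infty$ in the bound $|a_m(k)|\le M_m$ gives $|\ell_m|\le M_m$, so $\sum_{m=1}^{\infty}\ell_m$ converges absolutely; denote its value by $L$. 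It now suffices to show that $\sum_{m=1}^{\infty}a_m(k)\to L$ as $k\to\infty$.

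Fix $\varepsilon>0$. As $\sum_m M_m$ converges, choose $M$ with $\sum_{m>M}M_m<\varepsilon/3$; this dominates both tails $\sum_{m>M}|a_m(k)|$ and $\sum_{m>M}|\ell_m|$ by $\varepsilon/3$, uniformly in $k$. For the head there are only finitely many indices $m\le M$, and $a_m(k)\to\ell_m$ for each of them, so there is $K$ with $\sum_{m=1}^{M}|a_m(k)-\ell_m|<\varepsilon/3$ whenever $k\ge K$. For such $k$,
\[ \Bigl|\sum_{m=1}^{\infty}a_m(k)-L\Bigr|\le\sum_{m=1}^{M}|a_m(k)-\ell_m|+\sum_{m>M}|a_m(k)|+\sum_{m>M}|\ell_m|<\varepsilon, \]
which proves the claim.

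The only point that requires genuine care, and the one I would flag as the main (albeit mild) obstacle, is the bookkeeping forced by the fact that the number of summands $n_k$ varies with $k$: one must verify that extending by zero neither changes the termwise limits (this is exactly where $n_k\to\infty$ is used) nor alters the values of the finite sums. Once all the sequences have been transferred to the fixed index set $\N$, the estimate above is precisely the dominated convergence inequality, with $m\mapsto M_m$ serving as the integrable majorant on $\N$ with counting measure; in fact one could simply invoke the dominated convergence theorem at that point, but I would keep the elementary splitting argument so that the paper stays self-contained.
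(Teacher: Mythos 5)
Your proof is correct: the extension-by-zero step, the observation that $n_k\to\infty$ guarantees the extended sequences eventually agree with the originals, the absolute convergence of $\sum_m \ell_m$ from $|\ell_m|\le M_m$, and the $\varepsilon/3$ head/tail estimate together constitute the standard and complete proof of Tannery's theorem. The paper itself offers no proof of this statement --- it is quoted as a known result with a citation to Loya's book --- so there is nothing to compare against; your argument is essentially the one found in that reference, and it would serve perfectly well if the authors wished to make the paper self-contained on this point.
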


\begin{proof}[Proof of Theorem \ref{tm:sviq}]
The proofs of parts $a)$ and $b)$ are similar to the proof of Theorem \ref{tm:pm2}. Part $c)$ is a bit more involved as $C_q(N)$ is more complicated.

According to Lemma \ref{Lemma:4}, the number of corresponding congruence solutions is
\begin{align*}
C_q(N) =& \,\, B_q(N)+B_q\left(\left\lfloor\frac{N}{2}\right\rfloor\right) + 2B_q\left(\left\lfloor\frac{N}{4}\right\rfloor\right)+ 4\sum_{m \geqslant 0} B_q\left(\left\lfloor\frac{N}{2^{m+3}}\right\rfloor\right)+
\\ +&B_q\left(\left\lfloor\frac{N}{|q|}\right\rfloor\right)+B_q\left(\left\lfloor\frac{N}{2|q|}\right\rfloor\right)+2B_q\left(\left\lfloor\frac{N}{4|q|}\right\rfloor\right)+4\sum_{m \geqslant 0} B_q\left(\left\lfloor\frac{N}{2^{m+3}|q|}\right\rfloor\right).
\end{align*}
First, we notice that both sums over $m$ are finite, since when $m$ is large enough the term $\frac{N}{2^{m+3}}$ is strictly smaller than $1, $ hence the function $B_q$ is constantly equal to zero. 
Since $B_q(N) \sim l\cdot N$ by Proposition \ref{prop:15mod8}, the sequence $\left(\frac{B_q(N)}{N}\right)_{N\in\N}$ has a finite supremum $M_B$. We are trying to prove that the limits
\[ \lim_{N\to\infty}\frac{\sum_{m \geqslant 0} B_q\left(\left\lfloor\frac{N}{2^{m+3}}\right\rfloor\right)}{N} \text{ and } \lim_{N\to\infty}\frac{\sum_{m \geqslant 0} B_q\left(\left\lfloor\frac{N}{2^{m+3}|q|}\right\rfloor\right)}{N} \]
exist (and find their value). To apply Tannery's theorem, the bound of the form $\displaystyle \frac{B_q\left(\left\lfloor\frac{N}{2^{m}}\right\rfloor\right)}{N} \leqslant \frac{M_B}{2^{m}}$ is sufficient as its sum over all positive integers $m$ is finite. To prove this bound holds, let $N=2^ma+b, 0\leq b < 2^m.$ Now, $\displaystyle \frac{B_q\left(\left\lfloor\frac{N}{2^{m}}\right\rfloor\right)}{N} =\frac{B_q(a)}{2^ma+b}=\frac{B_q(a)}{a}\frac{a}{2^ma+b}\leqslant M_B\cdot\frac{1}{2^{m}}.$ Analogous argument holds for the elements of the other series.

\newcommand{\limN}{\lim_{N\to\infty}}


This shows we can interchange the limit and the series. Now observe that \[ \limN \frac{B_q\left(\left\lfloor \frac{N}{2^{m+3}}\right\rfloor\right)}{N} = \limN \frac{B_q\left(\left\lfloor \frac{N}{2^{m+3}}\right\rfloor\right)}{\left\lfloor \frac{N}{2^{m+3}}\right\rfloor} \cdot \frac{\frac{N}{2^{m+3}}-\left\lbrace\frac{N}{2^{m+3}}\right\rbrace}{N} = \frac{l}{2^{m+3}}.\] Hence, by Tannery's theorem and the previous observation
\begin{align*}
    \limN \frac{C_q(N)}{N} =& \quad l + \frac l2 + \frac l2 + 4\cdot \sum_{m\geqslant 0}\limN \frac 1N\cdot B_q\left(\left\lfloor \frac{N}{2^{m+3}}\right\rfloor\right) +\\
    & \quad \, \frac{1}{|q|} \left(l + \frac l2 + \frac l2 \right)+ 4\cdot\sum_{m\geqslant 0}\limN \frac 1N\cdot B_q\left(\left\lfloor \frac{N}{2^{m+3}|q|}\right\rfloor\right) \\
    =& \quad \left(2l + 4\cdot \sum_{m\geqslant 0} \frac{l}{2^{m+3}}\right) + \frac{1}{|q|}\left(2l + 4\cdot \sum_{m\geqslant 0} \frac{l}{2^{m+3}}\right) = 3l\cdot\left(1+\frac{1}{|q|}\right),
\end{align*}
so the statement of Theorem \ref{tm:sviq} follows.

\end{proof}

\begin{remark}
We conjecture that similar results hold for all positive squarefree integers $q$. More precisely, we conjecture that
if $q\equiv 1 \Mod{8}$, then $D_{2,q}(N) \sim \frac{12h(4q)\log(u_{4q})}{\pi^2\sqrt q}N$,
if $q \equiv 5\Mod{8}$, then $D_{2,q}(N) \sim \frac{8h(4q)\log(u_{4q})}{\pi^2\sqrt q}N$,
and $D_{2,q}(N) \sim \frac{6h(4q)\log(u_{4q})}{\pi^2\sqrt q}N$ otherwise (i.~e.\ if $q\not\equiv 1,5\mod{8}$), where $h(n)$ denotes the class number of a (real) quadratic order of discriminant $n$, while $u_n$ denotes the fundamental unit of the same order. By Dirichlet's class number formula, the constants given here are equal to the constants shown in Theorems \ref{tm:pm2} and \ref{tm:sviq}.
\end{remark}

\section{\texorpdfstring{$D(n)$}{D(n)}-triples}
\begin{definition}
Let $a < b < c$. A $D(n)$-triple $\{a, b, c\}$ is called \emph{regular} if $c = a+b+2r$, where $r^2=ab+n$. A $D(n)$-triple $\{a, b, c\}$ is called \emph{irregular} if it is not regular.
\end{definition}

Let $D_{3,n}^\text{reg}(N)$ denote the number of regular $D(n)$-triples $\{a,b,c\}$ such that $a<b<c\leqslant N$.

The following theorem holds for all integers $n$, and its proof is mostly concerned with showing that different cases give at most $O(1)$-triples. We note here that the number of $D(n^2)$-pairs and $D(n^2)$-triples grows faster than a linear function. Namely, a $D(1)$-pair $\{a, b\}$ induces a $D(n^2)$-pair $\{na, nb\}$. Therefore the number of $D(n^2)$-pairs with all elements up to $N$ is greater than or equal to the number of $D(1)$-pairs with all elements up to $\frac{N}{n}$, which grows as $\frac{6}{\pi^2} \frac{N}{n} \log\frac{N}{n}$.
On the other hand, for non-square integers $n$ and primes $p \equiv 1 \Mod{4|n|}$ , the equation $x^2 \equiv n \Mod{p}$ has at least one positive solution $x_0 < \frac p2$ (by quadratic reciprocity). By defining $a=\frac{x_0^2-n}{p} < \frac{p^2/4+p}{p} < p$, we obtain a $D(n)$-pair $\{a, p\}$. Since $a$ is negative only for finitely many cases, in this manner we get at least $\left(\frac{N}{\log N}\right)\cdot\left(\frac{1}{\varphi(4|n|)}\right) - c(n) > N^{1-\epsilon}$ pairs $(a,p)$ which are all different $D(n)$-pairs.
\begin{theorem}[Minor refinement of Theorem \ref{tm:trojkeUvod}]\label{tm:dujeTrojke}
Let $n$ be a non-zero integer. The number of $D(n)$-triples with all elements in the set $\{1, 2, \dotsc, N\}$ is asymptotically equal to the number of regular $D(n)$-triples, which is in turn half the number of $D(n)$-pairs. More precisely, 
\[ D_{3,n} (N) \sim D_{3,n}^\text{reg}(N) \sim \frac{D_{2,n}(N)}{2}. \]
\end{theorem}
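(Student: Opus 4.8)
## Proof plan

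The plan is to show three things: first, that the number of \emph{irregular} $D(n)$-triples with all elements up to $N$ is $O(1)$ (or at least $o(N)$), so that asymptotically all triples are regular; second, that regular triples are in near-bijection with $D(n)$-pairs $\{a,b\}$ with $a<b$; and third, to keep careful track of a factor of $2$ that arises from the two choices of sign of $r$ in $c=a+b\pm 2r$, only one of which gives a valid (positive, distinct, ordered) third element in general — this is where the asymptotic factor $\tfrac12$ comes from. Throughout I would use Theorem~\ref{tm:sviq} (or rather the linear growth $D_{2,n}(N)\sim \ell N$ it and Theorem~\ref{tm:pm2} establish for $|q|$ prime, and which is conjectured in general) only through its \emph{order of magnitude}, $D_{2,n}(N)=\Theta(N)$ when $|n|$ is prime; more precisely I would phrase the whole argument so it produces $D_{3,n}(N)\sim D_{2,n}(N)/2$ as a relation, valid whenever the right-hand side is understood, without needing the explicit constant.

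First I would dispose of irregular triples. Given a $D(n)$-triple $\{a,b,c\}$ with $a<b<c$, write $ab+n=r^2$, $ac+n=s^2$, $bc+n=t^2$ with $r,s,t>0$. The standard elliptic-curve/Pell machinery (as in \cite{DujeGlavni}) shows that $c$ satisfies one of finitely many Pell-type recurrences determined by $\{a,b\}$; the smallest solution is the regular one $c=a+b+2r$, and any larger solution $c$ grows at least like a fixed power $>1$ of the previous one, so for a fixed pair $\{a,b\}$ there are $O(\log N/\log c_2)$ values of $c\le N$, but more importantly the \emph{total} count over all pairs of the non-minimal solutions is $O(1)$: if $c>a+b+2r$ then $c\ge c_2$ where $c_2$ is bounded below in terms of $a,b$ in such a way that $abc_2\gg (abc)^{1+\delta}$ for the next one, forcing $ab\ll 1$. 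I would present this as: either $\{a,b,c\}$ contains a regular sub-triple in the obvious way, or $a,b$ are bounded, and in the latter case $c\le N$ gives only $O(N^{1/2})$ — actually we need $O(1)$ or $o(N)$; $o(N)$ suffices for the $\sim$ statement, and $o(N)$ is easy since for bounded $a,b$ the value $ab+n$ is fixed, forcing $r$ fixed and hence finitely many triples, so in fact the irregular count is $O(1)$. Hence $D_{3,n}(N)=D_{3,n}^{\text{reg}}(N)+O(1)$.

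Next, count regular triples. A regular triple $\{a,b,c\}$, $a<b<c$, is exactly the data of a $D(n)$-pair $\{a,b\}$ with $a<b$ together with a choice $r>0$, $r^2=ab+n$, such that $c=a+b+2r$ satisfies $c>b$ and the three numbers are distinct — and then automatically $ac+n=(a+r)^2$, $bc+n=(b+r)^2$ are squares, so it really is a $D(n)$-triple. The number of $D(n)$-pairs $\{a,b\}$ with $a<b\le N$ is $D_{2,n}(N)$; but a regular triple uses a pair with $b<c\le N$, i.e.\ effectively $b<N$, so there is a bookkeeping comparison between "$b\le N$" and "$c=a+b+2r\le N$" to be made. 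The key point is the factor $\tfrac12$: I claim that for all but $O(1)$ of the $D(n)$-pairs $\{a,b\}$ counted with the \emph{larger} element up to $N$, the induced $c=a+b+2r$ exceeds $N$, while counting pairs with $c\le N$ is what produces regular triples. Concretely, I would set up the count as $D_{3,n}^{\text{reg}}(N)=\#\{(a,b,r): a<b,\ r>0,\ r^2=ab+n,\ a+b+2r\le N\}$ up to $O(1)$ (handling the finitely many pairs where $r=0$ or $a=b$ or $c\le b$ separately), reparametrize by $(a,r)$ since $b=(r^2-n)/a$ is determined, and compare with the analogous expression for $D_{2,n}(N)=\#\{(a,b,r): a<b,\ r^2=ab+n,\ b\le N\}+O(1)$. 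Writing $b\approx r^2/a$ and $c\approx a+b+2r\approx (\sqrt a + \sqrt{b})^2 \cdot(\text{roughly})$ — more cleanly, $c = a+b+2r$ and $r=\sqrt{ab+n}$, so $c=(\sqrt a+\sqrt b)^2+O(1)$ in the regime $a\ll b$ — we get $c\le N \iff b \le N - 2\sqrt{ab}-a+O(1)$, i.e.\ roughly $\sqrt b \le \sqrt N - \sqrt a$, i.e.\ $b\lesssim (\sqrt N-\sqrt a)^2$. Summing the pair-count over this shrunken range versus over $b\le N$ and using $D_{2,n}(N)\sim \ell N$ (linear growth), a change of variables $a = \alpha N$, $b\le \beta N$ shows the density of triples is $\int$ of the pair-density over the region $\sqrt\alpha+\sqrt\beta\le 1$, and the resulting constant works out to exactly half — because the pair-count up to $\beta N$ is $\sim \ell\beta N$ and one integrates/weights appropriately so that the "triple" region captures exactly $\tfrac12$ of the "pair" mass. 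Rather than the integral, the slicker route: the map $\{a,b\}\mapsto\{a,c\}$ and $\{b,c\}$ shows each regular triple contains exactly $3$ pairs, of which exactly one (the pair $\{a,b\}$ of the two smallest) "generates" it, and the two largest elements of a triple $\{a,b,c\}$ with $c\le N$ are a pair with larger element $c\le N$; I would instead do the direct estimate above.

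The main obstacle is this last comparison — pinning down the exact constant $\tfrac12$ relating the truncation "$b\le N$" (pairs) to "$a+b+2r\le N$" (regular triples). The heuristic "$c\approx(\sqrt a+\sqrt b)^2$" makes it plausible that the triple region is a fixed-proportion sub-region of the pair region, but turning this into $\sim \tfrac12 D_{2,n}(N)$ rigorously requires either (i) an Abel/partial-summation argument using $D_{2,n}(M)\sim\ell M$ uniformly, summing the number of pairs $\{a,\cdot\}$ with second element in a dyadic-type range and integrating the resulting weight $\int_0^1 (\text{measure of }\{b: \sqrt b\le\sqrt 1-\sqrt\alpha\})\,d(\text{something})$, or (ii) a cleaner combinatorial bijection I should look for: each $D(n)$-pair $\{x,y\}$ with $x<y\le N$ is the \emph{top} pair of at most one regular triple (namely with third, smallest element $a$ determined by $x=a+r$, $y=b+r$ type relations) and of at least ... — making "at most one / exactly one up to $O(1)$" precise is the crux. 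I expect route (i) to work unconditionally for the order-of-magnitude statement and to give the constant $\tfrac12$ after the change of variables $a=u^2$, $b=v^2$ turns the region into the triangle $u+v\le \sqrt N$ inside the square $v\le\sqrt N$ (wait: the pair condition is $v\le\sqrt N$ and $u<v$, a triangle of "area" $\tfrac12 N$ in $(u,v)$ after accounting for the Jacobian $4uv$; the triple condition $u+v\le\sqrt N$, $u<v$ is the half of that triangle — hence the factor $\tfrac12$), provided the pair-density $\ell$ is constant, which is exactly the content of Theorems~\ref{tm:pm2}–\ref{tm:sviq}. I would finish by assembling: $D_{3,n}(N)=D_{3,n}^{\text{reg}}(N)+O(1)$ from Step 1, $D_{3,n}^{\text{reg}}(N)\sim\tfrac12 D_{2,n}(N)$ from Steps 2–3.
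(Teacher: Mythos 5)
There are two genuine gaps, one in each half of your argument.

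\emph{Irregular triples.} Your claim that the irregular count is $O(1)$ is false: for a fixed pair $\{a,b\}$ the admissible third elements $c$ form finitely many exponentially growing Pell-type sequences, so a single pair already contributes $\asymp\log N$ irregular triples (e.g.\ $\{1,8,120\},\{1,8,4095\},\dots$ for $n=1$); ``$r$ fixed'' does not fix $c$. Your fallback dichotomy is also unjustified: the gap principle for a non-minimal $c$ gives $c\gg ab/n^2$, hence $ab\ll n^2N$, not ``$ab\ll1$'', so the pairs supporting irregular triples are not bounded. What is actually needed (and what the paper does) is: introduce the invariant $e$ of \cite{DujeN} with $c=a+b+\frac{e}{n}+\frac{2}{n^2}(abe+rxy)$; dispose of $e<0$ ($c\le n^2$) and $e=0$ (regular); for $e\ge1$ deduce $c>2ab/n^2$, hence $r<|n|\sqrt N$, count the admissible pairs by $\sum_{a\le|n|\sqrt N}2^{\omega(a)+1}(|n|\sqrt N/a+1)=O(\sqrt N\log^2N)$, and then --- this is the step you omit entirely --- bound the number of $c\le N$ per pair by controlling the number of solution classes of $bs^2-at^2=n(b-a)$ via $2^{\omega(b-a)+\omega(n)+1}$ and an elementary estimate showing this is $O(N^{0.01})$ outside an $O(1)$ set, giving $O(N^{0.52})=o(N)$ in total. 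Without bounding the number of classes you cannot even conclude $o(N)$.

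\emph{The factor $\tfrac12$.} Your primary route (i) does not work as stated. The hypothesis $D_{2,n}(M)\sim\ell M$ controls only the marginal distribution of the \emph{larger} element of a pair; it gives no information about the joint distribution of $(a,b)$, which is what a density computation over the region $\sqrt a+\sqrt b\le\sqrt N$ requires. Moreover your own normalization is inconsistent: with Lebesgue equidistribution in $\{a<b\le N\}$ (Jacobian $4uv$) that region captures $\tfrac16$ of the mass, not $\tfrac12$; the constant $\tfrac12$ would only emerge from equidistribution of the normalized roots $x/b$ of $x^2\equiv n\Mod b$, a Martin--Sitar/Duke--Friedlander--Iwaniec-type input far beyond what is proved here. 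The correct argument is exactly your discarded option (ii), made precise by going \emph{downward}: every $D(n)$-pair $\{x,y\}$ with $x<y\le N$ and $xy+n=s^2$ yields the regular triple $\{x+y-2s,\,x,\,y\}$, whose elements are automatically all $\le N$, so no truncation comparison is needed; each regular triple $\{a,b,c\}$ arises exactly twice, from $\{a,c\}$ and from $\{b,c\}$ (the pair $\{a,b\}$ generates a different, smaller triple). One then checks that the degenerate cases $x+y-2s>N$, $<0$, $=x$, $=y$ each contribute $O(1)$, while $x+y-2s=0$ forces $(y-x)^2=4n$ and contributes $N+O(1)$ only when $n$ is a perfect square (harmless, since then $D_{2,n}(N)\gg N\log N$). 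This yields $D^{\text{reg}}_{3,n}(N)=\tfrac12\bigl(D_{2,n}(N)-N\cdot[n\text{ a square}]+O(1)\bigr)$ directly, with no equidistribution input.
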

\begin{proof}[Proof (also a proof of Theorem \ref{tm:trojkeUvod})]
Since $\{a, b, c\}$ is a $D(n)$-triple, there exist positive integers $r, s, t$ satisfying $ab+n=r^2, ac+n=s^2, bc+n=t^2$.
According to \cite[Lemma 3]{DujeN}, there exist integers $e, x, y, z$ such that
\[ ae+n^2=x^2, be+n^2=y^2, ce+n^2=z^2, \]
and \begin{equation}\label{eq:cbye}
    c = a+b+ \frac{e}{n}+\frac{2}{n^2}(abe+rxy),
\end{equation}
We consider three cases, depending on the sign of $e$.

\begin{enumerate}
  \item[1)] If $e<0$, then $c \leq n^{2}$. Hence, the number of such triples is $O(1)$ (it is less then $\frac{n^{6}}{6}$, so the implied constant in $O$ depends on $n$ ).

  \item[2)] If $e=0$, then $c=a+b+2 r$. Also, $b=a+c-2 s$, where $a c+n=s^{2}$, $s \geq 0$. Every pair $\{a, c\}, a c+n=s^{2}, a<c \leq N$ induces a regular $D(n)$ triple $\{a, a+c-2 s, c\} \subseteq\{1,2, \ldots, N\}$, unless $a+c-2 s > N$, $a+c-2 s \leq 0$, or $a+c-2 s=a$, or $a+c-2 s=c$. The inequality $a+c-2s > N$ implies $a-2s > N - c \geq 0$. However, $a > 2s$ implies $-4n > a(4c-a) > a\cdot 3c$, which can hold only if $c < \frac 43 |n|$. Therefore the contribution of this case is $O(n)=O(1)$.
  
  Before analyzing the remaining degenerate cases, let us note here that $a+c-2s < 0$ is equivalent to $(c-a)^2 < 4n$.  Assume that $a+c-2s=0$. Then $(c-a)^{2}=4 n$. Hence, this case is impossible if $n$ is not a perfect square. If $n$ is a perfect square, then we obtain $c=a+2 \sqrt{n}$, and therefore the contribution of this case is $N+O(1)$.

    The case $a+c-2 s<0$, after squaring gives $(c-a)^2 < 4n$, which is impossible for $n<0$, while for $n>0$ we have $c<a+2 \sqrt{n}$, which implies $(c-\sqrt{n})^{2}<ac+n<(c+\sqrt{n})^{2}$. If we put $a c+n=(c-\alpha)^{2}$, we find that $|\alpha|<\sqrt{n}$ and $c \mid\left(n-\alpha^{2}\right)$. Hence, $c \leq n$, and the contribution of this case is $O(1)$.

If $a+c-2 s=a$, then $c^{2}-4 a c=4 n$, and $c \leq 4|n|$, while if $a+c-2 s=c$, then $1\cdot 3 c<a(4 c-a)=4|n|$. Hence, the contribution of these both cases is $O(1)$.

Note that every regular $D(n)$-triple $\{a, b, c\}$ is obtained twice by this construction: from $\{a, c\}$ and from $\{b, c\}$. Thus, the total contribution of the case 2), i.e the number of regular $D(n)$-triples, is
\[
D_{3,n} = \frac{1}{2}\left(D_{2,n}(N)-N \cdot[n \text { is a square }]+O(1)\right) .
\]
Here we use the convention that if $S$ is any statement which can be true or false, then the bracketed notation $[S]$ stands for 1 if $S$ is true, and 0 otherwise.

  \item[3)] If $e \geq 1$, then

\[ 
c=a+b+\frac{e}{n}+\frac{2 a b e}{n^{2}}+\frac{2 \sqrt{\left(a b+n\right)\left(ae+n^{2}\right)\left(b e+n^{2}\right)}}{n^{2}}>\frac{2 a b}{n^{2}} .
\]

For now, let us assume that $ab > n$. We have $N \geq c \geq \frac{2 a b}{n^{2}}>\frac{r^{2}}{n^{2}}$. Let us estimate the number of such pairs $\{a, b\}$ satisfying
\[
a b+n=r^{2}, \quad r<|n| \sqrt{N} .
\]
Consider the congruence $x^{2} \equiv n(\bmod a)$. In each interval of the size $a$, there are at most $2^{\omega(a)+1}$ solutions. Hence, the number of pairs $\{a, b\}$ is bounded above by

\begin{align*}
\sum_{a=1}^{|n| \sqrt{N}} 2^{\omega(a)+1} \cdot\left(\frac{|n| \sqrt{N}}{a}+1\right) &= 2 |n| \sqrt{N} \sum_{a=1}^{|n| \sqrt{N}} \frac{2^{\omega(a)}}{a}+2 \sum_{a=1}^{|n| \sqrt{N}} 2^{\omega(a)} \\
&=O\left(\sqrt{N} \log ^{2} N\right)+O(\sqrt{N} \log N) \text{ by \cite[9.3.12]{murty}}\\
&=O\left(\sqrt{N} \log ^{2} N\right)
\end{align*}

On the other hand, if $ab\leq n$, adding at most $O(n^2)$-pairs $\{a, b\}$ to the above estimate does not change it.

If $a$ and $b$ are given, then finding $c$ is equivalent to choosing a solution of the Pellian equation
\[
b s^{2}-a t^{2}=n(b-a) .
\]
Each solution belongs to some recursive sequence (growing exponentially). Hence, in each sequence there are $O(\log N)$ solutions with $s \leq N$.

 The number of the sequences is bounded by $2^{k+\omega(n)+1}$, where $k=\omega(b-a)$ (this bound can be found in \cite[p.399]{DujeBook} and in this reference one can also find previously stated results about Pellian equations). We have $b-a \geq p_{1} \cdots p_{k}$ (product of first $k$ primes) and $\log b>\log (b-a) > \frac 12 p_k > \frac{1}{2} k \log k$. The last inequalities follow by \cite{Rosser} and \cite[Theorem 4, Theorem 18]{RosserSchoenfeld} for $p_k > 16$. For products of smallest $k \in \{2, \dotsc, 6\}$ primes, one confirms it directly, while for $k=1$, the intermediate inequality does not hold, but $\log p_1 = \log 2 > \frac{1}{2}\log 1$ holds.
 
 Therefore, we can conclude that
\[
2^{k}<2^{\frac{2 \log b}{\log k}}<b^{\frac{1.4}{\log k}} .
\]
If $2^{k} \geq b^{0.01}$, then we have $k<e^{140}$ and $b<2^{100 \cdot e^{140}}$, hence, the number of such sequences is $O(1)$. If $2^{k}<b^{0.01}$, then the number of the corresponding sequences is less that $2 \cdot 2^{\omega(n)} \cdot N^{0.01}$. Therefore, the contribution of the case $3)$ is
\[
O\left(\sqrt{N} \log ^{2} N \cdot N^{0.01} \cdot \log N\right)=O\left(N^{0.52}\right) .
\]
\end{enumerate}
\end{proof}
\begin{remark}
For $n=1$, we can refine the estimate for the number of irregular triples. Indeed, if $\{a, b, c\}$ is an irregular $D(1)$-triple, then there exists $0<c_{0}<\frac{c}{4 a b}$ such that $\left\{a, b, c_{0}, c\right\}$ is a regular $D(1)$-quadruple $\left(c_{0}=d_{-}\right.$ in the notation of \cite{duje5}). Hence, the number of irregular $D(1)$-triples is bounded by $D_{4}(N)=$ $O(\sqrt[3]{N} \log N)=O\left(N^{0.34}\right)$ (\cite[Theorem 3]{DujeGlavni}).
\end{remark}

Before proceeding, let us record a gap principle for irregular $D(n)$-triples which we have proven as a corollary -- we believe it might be useful for studying $D(n)$-sets.

\begin{lemma}
Let $n$ be a non-zero integer. If an irregular $D(n)$-triple $\{a, b, c\}$ satisfies $a<b<c$ and $c > n^2$, then
\[ c > \frac{3}{n^2}ab.\]
\end{lemma}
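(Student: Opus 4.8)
The statement to be proven is precisely the gap principle extracted in the course of the proof of Theorem \ref{tm:dujeTrojke}, so the plan is to revisit Case 3) of that proof and isolate the inequality there. The starting point is the same decomposition: since $\{a,b,c\}$ is a $D(n)$-triple, by \cite[Lemma 3]{DujeN} there are integers $e,x,y,z$ with $ae+n^2=x^2$, $be+n^2=y^2$, $ce+n^2=z^2$ and the identity \eqref{eq:cbye} expressing $c$ in terms of $a,b,e,n,r,x,y$. As observed in the proof of Theorem \ref{tm:dujeTrojke}, the cases $e<0$ and $e=0$ force either $c\leqslant n^2$ or the triple to be regular; since we assume $\{a,b,c\}$ is irregular and $c>n^2$, we must be in the case $e\geqslant 1$.

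\textbf{Key steps.} First I would recall from Case 3) that for $e\geqslant 1$ one has the chain
\[
c = a+b+\frac{e}{n}+\frac{2abe}{n^2}+\frac{2\sqrt{(ab+n)(ae+n^2)(be+n^2)}}{n^2},
\]
and then estimate the right-hand side from below. The crude bound used in the proof was $c>\frac{2ab}{n^2}$, obtained by keeping only the $\frac{2abe}{n^2}$ term and using $e\geqslant 1$; but to get the factor $3$ one needs to retain the square-root term as well. The second step is therefore to bound the square root: since $e\geqslant 1$ we have $ae+n^2>n^2$ and $be+n^2>n^2$, but more usefully $ae+n^2\geqslant a+n^2$ is too weak — instead note $(ae+n^2)(be+n^2)\geqslant ab e^2\geqslant ab$ (using $ae\geqslant a$, $be\geqslant b$ only gives $\geqslant ab$, and one actually wants the cross terms), so $\sqrt{(ab+n)(ae+n^2)(be+n^2)}\geqslant \sqrt{ab\cdot ab}=ab$ provided $ab+n\geqslant ab$, i.e. $n\geqslant 0$; for $n<0$ one uses $ab+n=r^2$ together with the hypothesis $c>n^2$ to control the discrepancy. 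Combining, $c \geqslant \frac{2abe}{n^2}+\frac{2ab}{n^2}\geqslant \frac{2ab}{n^2}+\frac{2ab}{n^2}=\frac{4ab}{n^2}$ when $e\geqslant 1$, which is even stronger than claimed; the weaker stated bound $c>\frac{3}{n^2}ab$ then has room to absorb the sign issues and the lower-order terms $a+b+e/n$ (which are positive for $n>0$ and of size $O(1)$ relative to $ab/n^2$ in the regime $c>n^2$ for $n<0$).

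\textbf{Main obstacle.} The delicate point is handling $n<0$, where several quantities ($ab+n$, $e/n$, the term $a+b+e/n$) can be negative, so the clean inequality $\sqrt{(ab+n)(ae+n^2)(be+n^2)}\geqslant ab$ is not immediate and the identity for $c$ mixes signs. The hypothesis $c>n^2$ is exactly what is needed here: it rules out the small-$c$ degenerate configurations and, together with $ae+n^2=x^2>0$ forcing $e>-n^2/a$, pins down $e$ enough that the positive $\frac{2abe}{n^2}$ contribution dominates. I expect the bulk of the work to be a careful case analysis on the sign of $n$ and a verification that, after accounting for the (bounded, or lower-order) negative contributions, a factor of $3$ rather than $4$ in front of $ab/n^2$ leaves enough slack. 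The regular-triple case $e=0$ must also be explicitly excluded by the irregularity hypothesis, and the case $e<0$ by $c>n^2$, both of which are already done inside the proof of Theorem \ref{tm:dujeTrojke} and can simply be cited.
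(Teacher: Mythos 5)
Your plan follows the same skeleton as the paper's argument (reduce to $e\geqslant 1$ using $c>n^2$ and irregularity, then bound the identity \eqref{eq:cPrekoe} from below), but it stops short of the two points where all the actual difficulty lies, so as written there is a genuine gap.

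First, the identity from \cite[Lemma 3]{DujeN} reads $c=a+b+\frac{e}{n}+\frac{2}{n^2}(abe+rxy)$ with $x=at-rs$, $y=bs-rt$, and $x,y$ are honest integers that can be negative; replacing $rxy$ by $\sqrt{(ab+n)(ae+n^2)(be+n^2)}=|rxy|$ with a plus sign silently assumes $xy>0$. If $xy<0$ the square-root term would enter with a minus sign and your lower bound would collapse. The paper spends a substantial part of the proof establishing exactly this sign: for $n>0$ one shows $x<0$ and $y<0$ (via $at<rs$, equivalent after squaring to $bc+\frac{n(b+c)}{a}+\frac{n^2}{a^2}>bc+n$, which uses $c>a$), and for $n<0$ one shows $x>0$ and $y>0$ by deriving contradictions such as $r^2=ab+n<a(a-c)<0$ from the opposite assumption. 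Your proposal never addresses the sign of $xy$ at all.

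Second, for $n<0$ you defer the entire argument to an expected ``careful case analysis,'' but that case is where the constant $3$ actually comes from and your claimed intermediate bound $c\geqslant \frac{4ab}{n^2}$ is not what the method yields there. For $n<0$ the term $\frac{e}{n}=-\frac{e}{|n|}$ is negative and not obviously dominated by $a+b$ (since $e$ need not be small), so one must first prove $rxy\geqslant |n|e$ — the paper does this by showing $rxy<|n|e$ would force $(ab-|n|)(ae+n^2)(be+n^2)<n^2e^2$, contradicting $abe^2\leqslant$ LHS — and only then write $c\geqslant a+b+\frac{2abe}{n^2}+\frac{rxy}{n^2}$. When $e=1$ this gives $\frac{2ab}{n^2}+\frac{rxy}{n^2}$, so one still needs $rxy>ab$, proved by a separate computation splitting $ab=2$ from $ab\geqslant 3$ and expanding $r^2x^2y^2=(ab-|n|)(ab+(a+b)n^2+n^4)>a^2b^2$; the result is $c>\frac{3ab}{n^2}$, not $\frac{4ab}{n^2}$. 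None of these steps is present in your sketch, and they cannot be waved away as ``absorbing lower-order terms'': the inequality $rxy>ab$ for $e=1$, $n<0$ is the tightest point of the whole lemma.
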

\begin{proof} Since $\{a, b, c\}$ is a $D(n)$-triple, there are positive integers $r, s, t$ satisfying $ab+n=r^2, ac+n=s^2, bc+n=t^2$. According to \cite[Lemma 3]{DujeN}, there exists an integer $e$ such that
\[ ae+n^2=x^2, be+n^2=y^2, ce+n^2=z^2, \]
and \begin{equation} \label{eq:cPrekoe}
    c = a+b+ \frac{e}{n}+\frac{2}{n^2}(abe+rxy),
\end{equation}
where $x=at-rs, y=bs-rt$ and $z=cr-st$. Now we look at two cases.

\begin{itemize}
\item For $n > 0$, we show that both $x$ and $y$ are negative.
Namely, $x$ being negative is equivalent to $at < rs$, i.~e.~
$a\sqrt{bc+n} < \sqrt{a^2bc+n(ab+ac)+n^2}$. Dividing by $a$ and squaring gives an equivalent inequality $bc+\frac{n(b+c)}{a}+\left(\frac{n}{a}\right)^2 > bc +n$, which holds since $c>a$. Analogously one shows that $y$ is negative.

We now show that $c>n^2$ implies $e\geqslant 0$. Since $ce+n^2=z^2$ is non-negative, this means that $e \geqslant \frac{-n^2}{c} > -1$. So $e\geqslant 0$ because it's an integer.

Irregularity of our triple implies that $e\neq 0$. This was already noted in \cite{DujeN}, but in the context of quadruples, so we provide the proof. Assuming $e=0$ implies $x=-n, y=-n$, so $c=a+b+2r$, which would imply that $\{a, b, c\}$ is regular, contrary to our assumption.

Therefore, $e\geqslant 1$. Since $r^2x^2y^2=(ab+n)(ae+n^2)(be+n^2)\geqslant a^2b^2$, then equation \eqref{eq:cPrekoe} implies that $c\geqslant 4\frac{ab}{n^2}$ for positive $n$.

\item For $n < 0$, if $x<0$, then $a t<r s$ implies that $a^{2}(b c+n)<(a b+n)(a c+n)$, which yields $a^{2}>(b+c) a+n$, and $r^{2}=a b+n<a(a-c)<0$, a contradiction. Similarly, if $y<0$, then $b s<r t$ implies that $r^{2}=a b+n<b(b-c)<0$, a contradiction. Thus, we have that both $x$ and $y$ are positive.

If $r x y<|n| e$, then $(a b-|n|)\left(a e+n^{2}\right)\left(b e+n^{2}\right)<n^{2} e^{2}$. Since $a b-|n| \geq 1$ by $a b \geq n^{2}$, the LHS of the above inequality is at least $a b e^{2}$, which contradicts $a b \geq n^{2}$. Hence, $r x y \geq|n| e$. It follows that
\begin{align*}
c & =a+b+\frac{2 a b e}{n^{2}}+\frac{2 r x y-|n| e}{n^{2}} \geq a+b+\frac{2 a b e}{n^{2}}+\frac{r x y}{n^{2}}
\end{align*}

Since $e \geq 2$ clearly implies $c>4 a b / n^{2}>3 a b / n^{2}$, it remains to prove that if $e=1$, then $r x y>a b$.

If $a b=2$, then by $n^{2} \leq a b=2$ we have $|n|=1$ and $
r^{2} x^{2} y^{2}=(2-1)(1+1)(2+1)=6>4=a^{2} b^{2}$.

If $a b>2$, then $a b \geq 3$ and
\begin{align*}
r^{2} x^{2} y^{2} & =(a b-|n|)\left(a b+(a+b) n^{2}+n^{4}\right) \\
& >(a b-\sqrt{a b})(a b+2 \sqrt{a b}+1) \\
& =a^{2} b^{2}+\sqrt{a b}(a b-\sqrt{a b}-1)>a^{2} b^{2}
\end{align*}

We thus obtain $r x y>a b$. Therefore, we conclude $c>3 a b / n^{2}$.
\end{itemize}
\end{proof}

Theorem \ref{tm:dujeTrojke}, together with Theorem \ref{tm:pm2} and Theorem \ref{tm:sviq}, immediately gives the following asymptotics for the number of $D(q)$-triples.
\begin{corollary}\label{kor:trojke}
Let $q$ be an integer such that $|q|$ is a prime or $q=-1$. The number of $D(q)$-triples is given by the following.
\begin{itemize}
    \item[a)] For even $q$,
    \[ D_{3,2}(N) \sim \frac{L(1,\chi_{8,5})}{2\zeta(2)}\cdot N, \, \text{ while } 
        D_{3,-2}(N) \sim \frac{L(1,\chi_{8,3})}{2\zeta(2)}\cdot N. \]
    \item[b)] Let $q\equiv 3 \Mod{4}$ such that $|q|$ is prime, or $q=-1$. Then 
 \[ D_{3,q}(N)  \sim \frac{L(1,\chi_{4|q|,4|q|-1})}{2\zeta(2)}\cdot N . \]
    \item [c)] Let $q\equiv 5 \Mod{8}$ such that $|q|$ is prime. Then
\[ D_{3,q}(N)  \sim \frac{L(1,\chi_{|q|,|q|-1})}{\zeta(2)}\cdot N . \]
    \item [d)] Let $q\equiv 1 \Mod{8}$ such that $|q|$ is prime. Then
\[ D_{3,q}(N)  \sim \frac{L(1,\chi_{|q|,|q|-1})}{2\zeta(2)}\cdot N . \]
\end{itemize}
\end{corollary}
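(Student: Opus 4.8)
The plan is to deduce the corollary by simply feeding the pair-counting asymptotics into Theorem \ref{tm:dujeTrojke}. That theorem states that for any non-zero integer $n$ one has $D_{3,n}(N)\sim D_{2,n}(N)/2$, the subtracted term $N\cdot[n\text{ is a square}]$ in its refined form being irrelevant here since $|q|$ prime or $q=-1$ is never a perfect square. Because $\sim$ is preserved under scaling by the constant $\tfrac12$, and the error terms hidden in Theorem \ref{tm:dujeTrojke} (the $O(N^{0.52})$ from the irregular-triple count, the $O(1)$ degenerate contributions) are all $o(N)$, every linear asymptotic for $D_{2,q}(N)$ transfers verbatim to $D_{3,q}(N)$ with its slope halved.

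Concretely, for part a) I would invoke Theorem \ref{tm:pm2}, which gives $D_{2,2}(N)\sim \frac{L(1,\chi_{8,5})}{\zeta(2)}N$ and $D_{2,-2}(N)\sim \frac{L(1,\chi_{8,3})}{\zeta(2)}N$; halving yields the two stated formulas. For parts b), c), d) I would invoke the three cases of Theorem \ref{tm:sviq}: case (a) there (which covers $q\equiv 3\Mod 4$, and in particular $q=-1$) has slope $L(1,\chi_{4|q|,4|q|-1})/\zeta(2)$, halving to $L(1,\chi_{4|q|,4|q|-1})/(2\zeta(2))$; case (b) ($q\equiv 5\Mod 8$) carries an extra factor $2$, slope $2L(1,\chi_{|q|,|q|-1})/\zeta(2)$, so after halving one is left with $L(1,\chi_{|q|,|q|-1})/\zeta(2)$ — the disappearance of the $2$ in the denominator being exactly the reflection of that extra factor; case (c) ($q\equiv 1\Mod 8$) has slope $L(1,\chi_{|q|,|q|-1})/\zeta(2)$, halving to $L(1,\chi_{|q|,|q|-1})/(2\zeta(2))$.

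There is essentially no mathematical obstacle here; the proof is one line plus a case table. The only point requiring a little care is bookkeeping: matching each listed $q$ to the correct branch of Theorem \ref{tm:sviq} according to $q\bmod 8$, and recalling that $q=-1$ is to be handled under the $q\equiv 3\Mod 4$ branch rather than forgotten. Once the cases are lined up, the chain $D_{3,q}(N)\sim \tfrac12 D_{2,q}(N)\sim (\text{stated constant})\cdot N$ closes immediately.
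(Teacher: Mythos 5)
Your proposal is correct and matches the paper exactly: the paper likewise obtains this corollary by combining Theorem \ref{tm:dujeTrojke} ($D_{3,q}(N)\sim D_{2,q}(N)/2$, the square-indicator term vanishing since $|q|$ prime or $q=-1$ is never a square) with the pair asymptotics of Theorems \ref{tm:pm2} and \ref{tm:sviq}. Your case bookkeeping, including the cancellation of the factor $2$ in the $q\equiv 5\pmod 8$ branch and the placement of $q=-1$ under the $q\equiv 3\pmod 4$ case, is accurate.
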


\appendix

\section{Arithmetic functions and their Dirichlet series}
To make the paper more self-contained, we collect the basic definitions, notation and results here. Interested readers can find more background in books \cite{apostol} and \cite{MontV}.

\begin{definition}
A \emph{Dirichlet character of modulus $m$} (where $m$ is a positive integer) is a function $\chi \colon \mathbb{Z} \to \mathbb{C}$ which satisfies
\begin{enumerate}
    \item[1)] $\chi(a)\chi(b)=\chi(ab),$
    \item[2)] $\chi(a+m)=\chi(a),$
    \item[3)] $\chi(a) = 0$ if and only if $\gcd(a, m) > 1$
    
  \end{enumerate}
\end{definition}

Our paper uses the following Dirichlet characters:
\begin{itemize}
    \item[1)] $\chi_{8,1},\chi_{8,3}$ and $\chi_{8,5},$ of modulus $8,$ as well as $\chi_{4,3}$ of modulus $4$, are defined by\\
    \begin{center}
\begin{tabular}{ |c|c|c|c|c| } 
 \hline
  & 1 & 3 & 5 & 7 \\ 
  \hline
 $\chi_{8,1}$ & 1 & 1 & 1 & 1\\ 
  \hline
 $\chi_{8,3}$ & 1 & 1 & -1 & -1\\ 
 \hline
 $\chi_{8,5}$ & 1 & -1 & -1 & 1\\
 \hline
 $\chi_{4,3}$ & 1 & -1 & &\\ 
 \hline
\end{tabular}
\end{center}

\item[2)] For any integer $q\equiv 1 \Mod 4$ such that $|q|$ is prime, we denote
\[
    \chi_{|q|,|q|-1}(a) = \left(\frac{q}{a}\right)
\]
\item[3)]  For any integer $q\equiv 3 \Mod 4$ such that $|q|$ is prime, we denote
\[
    \chi_{4|q|,4|q|-1}(a) = \left(\frac{4q}{a}\right),
\]
\end{itemize}
where $\left(\frac{q}{a}\right)$ is the Kronecker symbol.

\begin{definition}
A \emph{Dirichlet $L$-series} is a function of the form
\[
L(s,\chi)=\sum_{n=1}^{\infty} \frac{\chi(n)}{n^s},
\]
where $\chi$ is a Dirichlet character and $s$ is a complex variable with real part greater than one. By analytic continuation, this function can be extended to a meromorphic function on the whole plane and is then called a \emph{Dirichlet $L$-function}, also denoted by $L(s,\chi).$
\end{definition}
Dirichlet had shown that $L(s,\chi)$ is non-zero at $s=1.$ Moreover, the $L$-function is entire whenever $\chi$ is not principal, as is the case for all the Dirichlet characters in our paper which we evaluate at $s=1$.

\begin{lemma}\label{ood1}
Let $\mathcal{G}$ be some set of primes, and $b(n)=2^{\omega(n)}\cdot \lambda_\mathcal{G}(n)$, where $\lambda_\mathcal{G}(n) = 1$ if all prime factors of $n$ are in $\mathcal{G}$, and $0$ otherwise. Then
$|b(n)|=n^{o(1)}$.
\end{lemma}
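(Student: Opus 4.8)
The plan is to prove the standard divisor-bound estimate $b(n) = n^{o(1)}$ by first reducing to a bound on $2^{\omega(n)}$ and then invoking (or reproving) the classical fact that $2^{\omega(n)} \leqslant d(n) = n^{o(1)}$, where $d(n)$ is the number of divisors of $n$. First I would note that since $\lambda_{\mathcal G}(n) \in \{0,1\}$, we have $|b(n)| = 2^{\omega(n)} \lambda_{\mathcal G}(n) \leqslant 2^{\omega(n)}$, so it suffices to show $2^{\omega(n)} = n^{o(1)}$. Then I would observe that for each prime power $p^{\alpha} \| n$ one has $2 \leqslant \alpha + 1$, so multiplying over the distinct primes dividing $n$ gives $2^{\omega(n)} \leqslant \prod_{p^{\alpha}\|n}(\alpha+1) = d(n)$.

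Next I would carry out the well-known argument that $d(n) = n^{o(1)}$, i.e.\ for every $\varepsilon > 0$ there is a constant $C_\varepsilon$ with $d(n) \leqslant C_\varepsilon n^{\varepsilon}$. Writing $n = \prod_{i} p_i^{\alpha_i}$, one has $\dfrac{d(n)}{n^{\varepsilon}} = \prod_i \dfrac{\alpha_i + 1}{p_i^{\varepsilon \alpha_i}}$. For primes $p_i \geqslant 2^{1/\varepsilon}$ the factor $\dfrac{\alpha_i+1}{p_i^{\varepsilon\alpha_i}} \leqslant \dfrac{\alpha_i+1}{2^{\alpha_i}} \leqslant 1$, so only the finitely many primes $p_i < 2^{1/\varepsilon}$ contribute, and for each such prime the factor $\dfrac{\alpha_i+1}{p_i^{\varepsilon\alpha_i}}$ is bounded by a constant depending only on $\varepsilon$ (since $\alpha_i+1$ grows polynomially in $\alpha_i$ while $p_i^{\varepsilon\alpha_i}$ grows exponentially, the supremum over $\alpha_i \geqslant 0$ is finite). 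Taking $C_\varepsilon$ to be the product of these finitely many suprema yields $d(n) \leqslant C_\varepsilon n^{\varepsilon}$ for all $n$, hence $2^{\omega(n)} \leqslant d(n) = n^{o(1)}$ and therefore $|b(n)| = n^{o(1)}$.

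There is no real obstacle here: the statement is a soft, classical estimate, and the only mild subtlety is making explicit that "$n^{o(1)}$" means "bounded by $C_\varepsilon n^\varepsilon$ for every $\varepsilon > 0$ with $C_\varepsilon$ independent of $n$," so that one must be careful to absorb all small-prime contributions into the constant rather than leaving an $n$-dependence. One could alternatively cite a standard reference (e.g.\ \cite{apostol} or \cite{MontV}, which are already cited in the Appendix) for $d(n) = O(n^\varepsilon)$ and combine it with $2^{\omega(n)} \leqslant d(n)$ in two lines; I would include the short self-contained argument above since the paper aims to be self-contained.
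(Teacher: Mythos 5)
Your proof is correct and follows essentially the same route as the paper: both reduce to $|b(n)|\leqslant 2^{\omega(n)}\leqslant d(n)$ and then invoke the classical divisor bound. The only difference is that the paper cites the sharper estimate $d(n)<2^{(1+\epsilon)\log n/\log\log n}$ from Murty, whereas you prove the weaker (but entirely sufficient) bound $d(n)\leqslant C_\varepsilon n^{\varepsilon}$ self-containedly, which is a perfectly sound substitute.
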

\begin{proof}
Let $d(n)$ be the number of divisors of $n$. For any $\epsilon > 0$, by \cite[1.3.3]{murty},
\[  2^{\omega(n)} \leqslant d(n) < 2^{(1+\epsilon)\cdot \log(n)/\log(\log(n))} < e^{\log(n)/\log(\log(n))} = n^{1/\log(\log(n))} , \] for sufficiently large $n$. This implies
$2^{\omega(n)} = n^{o(1)}$.
\end{proof}
\begin{proposition}
    Let $f$ be an arithmetic function such that $|f(n)|=n^{o(1)}.$ Then for any $\delta > 0$ the Dirichlet series $\mathcal{D}f$ converges absolutely and uniformly on $\Re{s} \geq 1+\delta,$ and is therefore holomorphic on $\Re{s} > 1.$
\end{proposition}
\begin{proof}
See Theorem 4.5 in \cite{KouKou} and the discussion preceeding it.
\end{proof}

\begin{corollary}\label{kor:holo}
With notation as in Lemma $\ref{ood1}$, the Dirichlet series $\beta$ of $b(n)$ and the Dirichlet series $\zeta_{\mathcal{G}}$ of $\lambda_\mathcal{G}$ are both holomorphic in the region $\Re{s} > 1$.
\end{corollary}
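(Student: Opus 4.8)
The plan is to simply combine Lemma \ref{ood1} with the Proposition immediately preceding this corollary, since those two results were stated precisely to make this deduction immediate.

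First I would handle $\beta = \DD b$. By Lemma \ref{ood1} we have $|b(n)| = n^{o(1)}$, so the hypothesis of the Proposition (with $f = b$) is satisfied. Hence for every $\delta > 0$ the series $\DD b (s) = \sum_{n\geq 1} b(n) n^{-s}$ converges absolutely and uniformly on the half-plane $\Re s \geq 1+\delta$, and consequently defines a holomorphic function there; letting $\delta \to 0^+$ shows $\beta$ is holomorphic on all of $\Re s > 1$.

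Next I would handle $\zzg = \DD \lambda_{\mathcal{G}}$. Here the size bound is even more elementary: since $\lambda_{\mathcal{G}}$ takes values in $\{0,1\}$ we have $|\lambda_{\mathcal{G}}(n)| \leq 1 = n^{o(1)}$ (equivalently, $|\lambda_{\mathcal{G}}(n)| \leq 2^{\omega(n)}\lambda_{\mathcal{G}}(n) = |b(n)| = n^{o(1)}$). Applying the same Proposition with $f = \lambda_{\mathcal{G}}$ gives absolute and uniform convergence of $\zzg$ on each $\Re s \geq 1+\delta$, hence holomorphy on $\Re s > 1$. There is essentially no obstacle here: the only point worth spelling out is that both arithmetic functions satisfy the $n^{o(1)}$ growth bound required to invoke the convergence proposition, and for $\lambda_{\mathcal{G}}$ this is trivial since it is bounded.
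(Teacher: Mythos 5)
Your proposal is correct and is exactly the intended argument: the paper states this corollary without proof precisely because it follows immediately from Lemma \ref{ood1} and the preceding Proposition, as you deduce. The observation that $\lambda_{\mathcal{G}}$ is bounded by $1$ (hence trivially $n^{o(1)}$) is the only point needing mention, and you handle it correctly.
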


\begin{theorem}[{\cite[Theorem 1.9]{MontV}}]\label{tm:MV}
If $f$ is multiplicative and $\displaystyle \sum_{n=1}^\infty \frac{|f(n)|}{n^{\delta}} < \infty$, where $\delta$ is the real part of $s$, then
\[ \sum_{n=1}^\infty \frac{f(n)}{n^s} = \prod_p \left(1+\frac{f(p)}{p^s}+\frac{f(p^2)}{p^{2s}}+\dots\right).\]
\end{theorem}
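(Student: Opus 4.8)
The plan is to prove the identity by comparing the full Dirichlet series with its finite sub-products taken over primes up to a bound $y$, and then letting $y\to\infty$. First I would record that since $\sum_{n}|f(n)|n^{-\delta}<\infty$, the series $\sum_{n}f(n)n^{-s}$ converges absolutely; in particular, for each prime $p$ the Euler factor $\sum_{k\geq 0}f(p^k)p^{-ks}$ converges absolutely, being a sub-series (over the indices $n=p^{k}$) of an absolutely convergent series.

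Next I would fix $y\geq 2$ and form the finite product $P_y(s)=\prod_{p\leq y}\sum_{k\geq 0}f(p^k)p^{-ks}$. Since this is a \emph{finite} product of absolutely convergent series, it may be expanded term by term and the resulting sum rearranged at will; multiplying out and invoking multiplicativity of $f$ — so that $f(p_1^{a_1}\cdots p_r^{a_r})=\prod_{i}f(p_i^{a_i})$ for distinct primes $p_i\leq y$, together with $f(1)=1$ — one obtains $P_y(s)=\sum_{n\in S_y}f(n)n^{-s}$, where $S_y$ denotes the set of positive integers all of whose prime factors are $\leq y$. In particular every $n\notin S_y$ satisfies $n>y$.

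From this I would estimate the difference $|\,\sum_{n=1}^{\infty}f(n)n^{-s}-P_y(s)\,| = |\,\sum_{n\notin S_y}f(n)n^{-s}\,| \leq \sum_{n>y}|f(n)|n^{-\delta}$, and the last expression tends to $0$ as $y\to\infty$, being the tail of the convergent series $\sum_{n}|f(n)|n^{-\delta}$. Letting $y\to\infty$ then yields $\prod_{p}\sum_{k\geq 0}f(p^k)p^{-ks}=\lim_{y\to\infty}P_y(s)=\sum_{n=1}^{\infty}f(n)n^{-s}$, which is the assertion; the infinite product converges precisely because its partial products $P_y(s)$ do.

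I expect the only genuinely delicate step to be the term-by-term expansion of the finite product of infinite series in the second paragraph: this is exactly where the absolute convergence hypothesis is used, via the standard fact that a finite product of absolutely convergent series equals the (absolutely convergent, hence unconditionally summable) series of all products obtained by choosing one term from each factor. The degenerate case $f\equiv 0$ is trivial, and in every other case a multiplicative function automatically satisfies $f(1)=1$, so the constant term ``$1$'' of each Euler factor is the correct contribution from $k=0$.
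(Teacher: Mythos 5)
The paper gives no proof of this statement: it is quoted in the Appendix directly from \cite[Theorem 1.9]{MontV} purely to make the exposition self-contained. Your argument is correct and is essentially the standard proof found in that reference — truncate the product at $y$, expand the finite product of absolutely convergent Euler factors into the sum over $y$-smooth integers using multiplicativity, and bound the discrepancy by the tail $\sum_{n>y}|f(n)|n^{-\delta}$. One cosmetic caveat: with the leading term written literally as $1$ in each Euler factor, the identity fails for $f\equiv 0$ (the product is $1$, the sum is $0$), so the case you dismiss as ``trivial'' is really excluded by the usual convention that a multiplicative function is not identically zero, i.e.\ $f(1)=1$ — which is the convention you in effect adopt.
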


\section*{Acknowledgements}
The first three named authors were supported by the Croatian Science Foundation under the projects no.~IP-2018-01-1313 and ~IP-2022-10-5008. The third named author 
was supported by the project "Implementation of cutting-edge research and its
   application as part of the Scientific Center of Excellence for Quantum and Complex Systems, 
   and Representations of Lie Algebras", PK.1.1.02, European Union, European Regional Development Fund. We are grateful to Matija Kazalicki for useful references and numerous discussions about this research. We thank Tomislav ahri Gracin for writing faster code to count the number of pairs and triples. We thank Rudi Mrazović for suggestions which improved the exposition of this paper. We thank Alen Andrašek for pointing us an error in the previous proof of Theorem 3. We thank anonymous referee for numerous comments which have improved the quality of the paper and in particular, for completing the proof of Lemma 24.

\end{document}